\documentclass[11pt,twoside,reqno]{amsart}

\usepackage{microtype}
\usepackage[OT1]{fontenc}
\usepackage{type1cm}
\usepackage{amssymb}
\usepackage{enumerate}
\usepackage{comment}
\usepackage{xcolor}
\usepackage{tikz}

\usepackage{geometry}
\geometry{a4paper,centering}


\numberwithin{equation}{section}

\theoremstyle{plain}
\newtheorem{theorem}{Theorem}[section]

\newtheorem{proposition}[theorem]{Proposition}

\newtheorem{lemma}[theorem]{Lemma}

\theoremstyle{remark}

\newtheorem{example}[theorem]{Example}
\newtheorem*{ack}{Acknowledgement}

\theoremstyle{definition}

\newcommand{\BB}{\mathcal{B}}
\newcommand{\HH}{\mathcal{H}}

\newcommand{\PP}{\mathcal{P}}

\newcommand{\R}{\mathbb{R}}

\newcommand{\N}{\mathbb{N}}

\newcommand{\iii}{\mathtt{i}}
\newcommand{\jjj}{\mathtt{j}}

\newcommand{\fii}{\varphi}
\newcommand{\roo}{\varrho}

\renewcommand{\ge}{\geqslant}
\renewcommand{\le}{\leqslant}
\renewcommand{\geq}{\geqslant}
\renewcommand{\leq}{\leqslant}

\DeclareMathOperator{\dimh}{dim_H}

\DeclareMathOperator{\dimp}{dim_p}
\DeclareMathOperator{\udimp}{\overline{dim}_p}
\DeclareMathOperator{\ldimp}{\underline{dim}_p}

\DeclareMathOperator{\dimm}{dim_M}
\DeclareMathOperator{\udimm}{\overline{dim}_M}
\DeclareMathOperator{\ldimm}{\underline{dim}_M}

\DeclareMathOperator{\dimf}{dim_F}

\DeclareMathOperator{\dimd}{dim_{\Theta}}

\newcommand{\diml}[1][]{\operatorname{dim_L^{\,#1}}}

\newcommand{\dima}[1][]{\operatorname{dim_A^{\,#1}}}

\newcommand{\udimreg}[1][]{\operatorname{dim_{A}^{\,#1}}}

\DeclareMathOperator{\dimsim}{dim_{sim}}

\DeclareMathOperator{\dist}{dist}
\DeclareMathOperator{\diam}{diam}
\DeclareMathOperator{\diag}{diag}


\begin{document}

\title{Minkowski dimension for measures}

\author{Kenneth J. Falconer}
\address[Kenneth J. Falconer]
        {School of Mathematics and Statistics \\
         University of St Andrews \\
             KY16 9SS \\
         UK}
\email{kjf@st-andrews.ac.uk}

\author{Jonathan M.\ Fraser}
\address[Jonathan M.\ Fraser]
        {School of Mathematics and Statistics \\
         University of St Andrews \\
                 KY16 9SS \\
         UK}
\email{jmf32@st-andrews.ac.uk}

\author{Antti K\"aenm\"aki}
\address[Antti K\"aenm\"aki]
        {Research Unit of Mathematical Sciences \\ 
         P.O.\ Box 8000 \\ 
         FI-90014 University of Oulu \\ 
         Finland}
\email{antti.kaenmaki@oulu.fi}

\thanks{JMF and KJF are financially supported by an  EPSRC Standard Grant (EP/R015104/1) and JMF by a Leverhulme Trust Research Project Grant (RPG-2019-034).}
\subjclass[2010]{Primary 28A75, 28A80, 54E35; Secondary 28A78, 54F45.}
\keywords{Minkowski dimension, Assouad dimension, doubling metric space}
\date{\today}

\begin{abstract}
The purpose of this article is to introduce and motivate the notion of Minkowski (or box) dimension for measures. The definition is simple and fills a gap in the existing literature on the dimension theory of measures.  As the terminology suggests, we show that it can be used to characterise the Minkowski dimension of a compact metric space.  We also study its relationship with other concepts in dimension theory.
\end{abstract}

\maketitle

\section{Introduction}

It is well-known that the Hausdorff and packing dimensions of a compact metric space $X$ can be approximated arbitrary well from below by the Hausdorff and packing dimensions of measures supported on $X$; see e.g.\ \cite[\S 10]{Falconer1997}. We prove an analogous result for the Minkowski (or box)  dimension.  This first involves introducing upper and lower Minkowski dimensions for measures, and then proving that the Minkowski dimensions of $X$ can be approximated arbitrary well from above, and indeed are attained by, the Minkowski dimensions of measures fully supported on $X$. As working with measures is a rather standard approach in determining the Hausdorff or packing dimension of sets, we expect our new notion to become a useful concept in fractal geometry. Indeed, since the first version of this paper was available online it has already found use in \cite{BaranyJurgaKolossvary2021} where the authors studied the convergence rate of the chaos game. Perhaps most interestingly, it is shown in \cite{AlvaradoGorkaHajlasz2020} that the Minkowski dimension characterizes the existence of Sobolev embeddings. Moreover, our conclusions on Minkowski dimension led us to consider the Frostman dimension and the Assouad spectrum of measures in the last two sections. This has also already found use in \cite{KleptsynPollicottVytnova2022} and \cite{HareHare2020}, respectively.

The upper Minkowski dimension of $\mu$ is defined to be the infimum of all  $s \ge 0$ for which there is a constant $c>0$ such that $\mu(B(x,r)) \ge cr^s$ for all $x \in X$ and $0<r<1$.  We show that the upper Minkowski dimension of a compact set $X$ is the minimum of the upper Minkowski dimensions of measures supported on $X$.  Recall that the Hausdorff dimension of an analytic set $X$ is, by Frostman's lemma,  the supremum of all $s \ge 0$ for which there exists a measure $\mu$ supported on $X$ satisfying $\mu(B(x,r)) \le Cr^s$ for all $x \in X$ and $r>0$ for another  constant $C>0$ independent of $x$ and $r$. Therefore, interestingly, the natural pair with symmetric properties is the Hausdorff dimension and upper Minkowski dimension (of sets).  This is perhaps surprising because it is more often the   Hausdorff and packing dimensions which   behave as a pair.

In order to   motivate our new concept and place it in context,  we study further  properties of the  Minkowski dimensions of measures. So as to present a complete picture, we  fill in some gaps in the literature concerning notions related to the Minkowski dimension including the packing dimension, Assouad spectrum and Frostman dimension.  We show that if $0<r<1$ in the definition of Minkowski dimension  is not assumed to be uniform, then the analogous definition leads to packing dimension. We also show that the upper Minkowski dimension of a  measure is attained as the limiting value of the Assouad spectrum of the measure as the parameter $\theta$ tends to zero.  This is analogous to the situation for sets and further justifies the use of the term Minkowski dimension. The Assouad spectrum is a continuum of dimensions depending on a parameter $0 < \theta < 1$ and is related to the more familiar Assouad dimension.  Finally, we observe that, interestingly, the limiting behaviour of the lower spectrum is different from the Assouad spectrum.

\section{Minkowski dimension}

Let $(X,d)$ be a metric space. Since we use only one metric $d$ on $X$, we simply denote $(X,d)$ by $X$. A closed ball centred at $x\in X$ with radius $r>0$ is denoted by $B(x,r)$. We say that $X$ is \emph{doubling} if there is $N \in \N$ such that any closed ball of radius $r > 0$ can be covered by $N$ balls of radius $r/2$. Furthermore, we call any countable collection $\BB$ of pairwise disjoint closed balls a \emph{packing}. It is called an \emph{$r$-packing} for $r>0$ if all of the balls in $\BB$ have radius $r$. An $r$-packing $\BB$ is termed \emph{maximal} if for every $x \in X$ there is $B \in \BB$ so that $B(x,r) \cap B \ne \emptyset$. Note that if $\BB$ is a maximal $r$-packing, then $2\BB = \{ 2B : B \in \BB \}$ covers $X$. Let $X$ be compact and write
\begin{equation*}
  N_r(X) = \max\{\#\BB : \BB \text{ is an $r$-packing}\} < \infty.
\end{equation*}
The \emph{upper and lower Minkowski dimensions} of $X$ are
\begin{align*}
  \udimm(X) &= \limsup_{r \downarrow 0} \frac{\log N_r(X)}{-\log r}, \\ 
  \ldimm(X) &= \liminf_{r \downarrow 0} \frac{\log N_r(X)}{-\log r},
\end{align*}
respectively. If $\ldimm(X)=\udimm(X)$, then the common value, the \emph{Minkowski dimension} of $X$, is denoted by $\dimm(X)$. Note that equivalent definitions of Minkowski dimensions are given using variants on the definition of $N_r$, see e.g.\ \cite[\S 2.1]{Falconer2014}. Also, the Minkowski dimension is often referred to as the box or box-counting dimension.

The above definitions, and also the definitions of other set dimensions in the coming sections, extend naturally to all subsets of $X$ by considering the restriction metric. Let $\mu$ be a fully supported finite Borel measure on $X$. We define the \emph{upper and lower Minkowski dimensions} of $\mu$ to be
\begin{equation} \label{eq:def-upper-minkowski}
\begin{split}
  \udimm(\mu) = \inf\{ s \ge 0 : \;&\text{there exists a constant } c>0 \text{ such that } \\
  &\mu(B(x,r)) \ge cr^s \text{ for all $x \in X$ and $0<r<1$} \}
\end{split}
\end{equation}
and
\begin{align*}
  \ldimm(\mu) = \inf\{ s \ge 0 : \;&\text{there exist a constant } c>0 \text{ and} \text{ a sequence } (r_n)_{n \in \N} \\ 
  &\text{of positive real numbers such that} \lim_{n\to\infty}r_n=0 \text{ and} \\
  &\mu(B(x,r_n)) \ge cr_n^s \text{ for all $x \in X$ and $n \in \N$} \},
\end{align*}
respectively. In Theorem \ref{thm:minkowski-dim}, we will connect these to the Minkowski dimensions of the support $X$. This connection appears to be rather delicate as not having a uniform $0<r<1$ in \eqref{eq:def-upper-minkowski} leads to packing dimension; see Theorem \ref{thm:packing-dim}. It is easy to see that
\begin{equation*}
  \udimm(\mu) = \limsup_{r \downarrow 0} \sup_{x \in X} \frac{\log\mu(B(x,r))}{\log r}
\end{equation*}
and
\begin{equation*}
  \ldimm(\mu) = \liminf_{r \downarrow 0} \sup_{x \in X} \frac{\log\mu(B(x,r))}{\log r},
\end{equation*}
see \cite[Lemma 1.1]{BaranyJurgaKolossvary2021}. This characterization gives an easy way to compare the Minkowski dimensions to local dimensions of the measure, and therefore also to the Hausdorff and packing dimensions. If $\ldimm(\mu) = \udimm(\mu)$, then the common value, the \emph{Minkowski dimension} of $\mu$, is denoted by $\dimm(\mu)$. Our definitions are different to that of Pesin \cite[\S 7]{Pesin1997}. He introduced quantities which are at most the Minkowski dimension of $X$ whereas ours are at least. As the following theorem shows, the Minkowski dimension of a set can be recovered from the Minkowski dimension of measures supported on the set,  that is, there is a variational principle.  

\begin{theorem} \label{thm:minkowski-dim}
  If $X$ is a compact  metric space, then
  \begin{align*}
    \udimm(X) &= \min\{ \udimm(\mu) : \mu \text{ is a fully supported finite Borel measure on } X \}, \\ 
    \ldimm(X) &= \min\{ \ldimm(\mu) : \mu \text{ is a fully supported finite Borel measure on } X \}.
  \end{align*}
\end{theorem}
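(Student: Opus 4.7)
The plan is to establish each equality by proving both inequalities. The easy direction $\udimm(X) \leq \udimm(\mu)$ (and its $\ldimm$ analogue) comes from a disjointness argument: if $\mu(B(x,r)) \geq c r^s$ for all $x \in X$ and $0 < r < 1$, then the pairwise disjoint balls of any $r$-packing $\BB$ give $\mu(X) \geq \#\BB \cdot c r^s$, hence $N_r(X) \leq \mu(X)/(c r^s)$. Taking $\limsup_{r \downarrow 0} \log(\cdot)/(-\log r)$ yields $\udimm(X) \leq s$, and the infimum over admissible $s$ gives the claim. The $\ldimm$ version follows by running the same estimate along the special scale sequence $r_n \to 0$ supplied by the definition.

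For the reverse inequality in the upper case, fix $s > s' > \udimm(X)$, so that $N_r(X) \leq M r^{-s'}$ for some $M > 0$ and all $0 < r < 1$. Let $r_k = 2^{-k}$, choose a maximal $r_k$-packing $\BB_k = \{B(x_{k,i}, r_k) : 1 \leq i \leq N_k\}$ with $N_k \leq M 2^{k s'}$, and define
\[
  \mu_s = \sum_{k=0}^\infty r_k^s \sum_{i=1}^{N_k} \delta_{x_{k,i}}.
\]
Its total mass is bounded by $M \sum_k 2^{-k(s-s')} < \infty$. For any $x \in X$ and $0<r<1$, pick $k \geq 1$ with $r/4 < r_k \leq r/2$; maximality of $\BB_k$ produces a centre $x_{k,i}$ with $d(x, x_{k,i}) \leq 2 r_k \leq r$, so $\mu_s(B(x,r)) \geq r_k^s \geq 4^{-s} r^s$, giving $\udimm(\mu_s) \leq s$. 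Since the centres at scale $r_k$ are $2 r_k$-dense in $X$ and $r_k \to 0$, the closed support of $\mu_s$ is $X$.

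A single fully supported measure realising the minimum is produced by diagonalisation. Picking $s_n \downarrow \udimm(X)$ and setting $\mu = \sum_n 2^{-n} \mu_{s_n}/\mu_{s_n}(X)$, then for any $s > \udimm(X)$ and any $n$ with $s_n < s$, we have $\mu(B(x,r)) \geq c\, r^{s_n} \geq c\, r^s$ on $0 < r < 1$, so $\udimm(\mu) \leq s$ and therefore $\udimm(\mu) = \udimm(X)$. The lower Minkowski statement is proved identically, with the sole modification that the scales $r_k$ are chosen along a sequence tending to $0$ on which $N_{r_k}(X) \leq r_k^{-s'}$ (available since $\ldimm(X) < s'$), thinned so that $r_{k+1} \leq r_k/2$; the bound $\mu_s(B(x, 2 r_k)) \geq r_k^s$ at every $x$ then gives $\ldimm(\mu_s) \leq s$ along $2 r_k \to 0$. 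The principal obstacle is the dual role of the exponent $s$ in the atomic weights $r_k^s$: it must be strictly above $\udimm(X)$ (respectively $\ldimm(X)$) to make the geometric series defining $\mu_s(X)$ converge, but must also be the desired target exponent delivered by the lower bound obtained via the maximality of the packings.
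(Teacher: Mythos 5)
Your proof is correct and follows essentially the same route as the paper: both directions rest on the identical packing argument (disjointness for the easy inequality, atoms placed at the centres of maximal dyadic packings plus the maximality/$2\BB_k$-covering property for the hard one). The only real difference is the weighting: you give each level-$k$ atom mass $r_k^s$, so your measure depends on $s$ and you must diagonalise over $s_n \downarrow \udimm(X)$ at the end, whereas the paper normalises level $k$ to total mass $k^{-2}N_k^{-1}$ per atom, yielding a single $s$-independent measure that attains the minimum directly.
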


\begin{proof}
  Let us first consider the claim for the upper Minkowski dimension.  Let $\mu$ be a fully supported finite Borel measure on $X$ and suppose $\udimm(\mu) < s < \infty$. It follows that there exists a constant $c>0$ such that $\mu(B(x,r)) \ge cr^s$ for all $x \in X$ and $0<r<1$. If $\{B_i\}_{i=1}^N$ is an $r$-packing, then
  \begin{equation*}
    Ncr^s \le \sum_{i=1}^N \mu(B_i) \le \mu(X).
  \end{equation*}
  Since this holds for every $r$-packing, we see that $N_r(X) \le c^{-1}\mu(X)r^{-s}$ for all $0<r<1$ and hence, $\udimm(X) \le s$. This proves one direction of the desired result.

 To show the other direction, we may assume $\udimm(X)<\infty$ since otherwise there is nothing to prove.  Let $k \in \N$ and choose a $2^{-k}$-packing $\BB_k$ such that $N_{2^{-k}}(X) = \#\BB_k$. Note that, by the definition of $N_{2^{-k}}(X)$, $\BB_k$ is maximal and hence, $2\BB_k$ covers $X$. Write $N_k = N_{2^{-k}}(X)$ and $\{B(x_{k,i},2^{-k})\}_{i=1}^{N_k} = \BB_k$. Fix $s>\udimm(X)$, choose $C \ge 1$ such that $N_k \le Ck^{-2} 2^{ks}$ for all $k \in \N$, and define
  \begin{equation*}
    \mu = \sum_{k \in \N} k^{-2} \sum_{i=1}^{N_k} N_k^{-1} \delta_{x_{k,i}},
  \end{equation*}
  where $\delta_x$ is the Dirac measure at $x$. Since
  \begin{equation*}
    \mu(X) = \sum_{k \in \N} k^{-2} \sum_{i=1}^{N_k} N_k^{-1} = \sum_{k \in \N} k^{-2} < \infty,
  \end{equation*}
  $\mu$ is a fully supported finite Borel measure on $X$. Given $x \in X$ and $0<r<1$, choose $k \in \N$ such that $2^{-k+1} < r \le 2^{-k+2}$. Since $\{B(x_{k,i},2 \cdot 2^{-k})\}_{i=1}^{N_k}$ covers $X$, there exists $i \in \{1,\ldots,N_k\}$ such that $x_{k,i} \in B(x,2 \cdot 2^{-k}) \subset B(x,r)$. Therefore,
  \begin{equation*}
    \mu(B(x,r)) \ge k^{-2} N_k^{-1}  \ge  C^{-1} 2^{-ks}
  \end{equation*}
  which proves   $\udimm(\mu) \le s$.  Since $s>\udimm(X)$ was arbitrary, it follows that  $\udimm(\mu)  = \udimm(X)$, completing the proof.

  The claim for the lower Minkowski dimension is proved similarly. To see that $\ldimm(X) \le \ldimm(\mu)$ for all fully supported finite Borel measures $\mu$, just replace arbitrary radii $0<r<1$ by the appropriate sequence $(r_n)_{n\in\N}$ in the corresponding argument for the upper Minkowski dimension. To see the other direction, let $(k_n)_{n \in \N}$ be a strictly increasing sequence of natural numbers such that $\ldimm(X)=\lim_{n\to\infty} \log N_{2^{-k_n}}(X) / \log (2^{k_n})$. Let $n \in \N$ and choose a $2^{-k_n}$-packing $\BB_n$ such that $N_{2^{-k_n}}(X) = \#\BB_n$. Note that, by the definition of $N_{2^{-k_n}}(X)$, $\BB_n$ is maximal and hence $2\BB_n$ covers $X$. Write $N_n = N_{2^{-k_n}}(X)$ and $\{B(x_{n,i},2^{-k_n})\}_{i=1}^{N_n} = \BB_n$. Fix $s>\ldimm(X)$, choose $C \ge 1$ such that $N_n \le Ck_n^{-2} 2^{k_ns}$ for all $n \in \N$, and define a fully supported finite Borel measure
  \begin{equation*}
    \mu = \sum_{n \in \N} k_n^{-2} \sum_{i=1}^{N_n} N_n^{-1} \delta_{x_{n,i}}.
  \end{equation*}
  Write $r_n = 2 \cdot 2^{-k_n}$ for all $n \in \N$ and notice that, for each $x \in X$ and $n \in \N$, we have
  \begin{equation*}
    \mu(B(x,r_n)) \ge k_n^{-2} N_n^{-1}  \ge  C^{-1} 2^{-k_ns}
  \end{equation*}
  and $\ldimm(\mu)  = \ldimm(X)$ as required.
\end{proof}

Theorem \ref{thm:minkowski-dim} generalizes the result of Tricot \cite[Lemma 4]{Tricot1982} whose proof relies on an argument symmetrical to Frostman's lemma and covers only the upper Minkowski dimension. We also remark that,  Theorem \ref{thm:minkowski-dim} contains most useful information when the Minkowski dimensions are finite.  This holds for any compact doubling metric space, for example.  

Recall that a measure $\mu$ on $X$ is \emph{doubling} if there is a constant $C \ge 1$ such that
\begin{equation*}
  0 < \mu(B(x,2r)) \le C\mu(B(x,r)) < \infty
\end{equation*}
for all $x \in X$ and $0<r<1$. The measures constructed in Theorem \ref{thm:minkowski-dim} are clearly not in general doubling measures.  We show that sometimes this cannot be avoided.  Specifically,  in Proposition \ref{thm:inhomogeneous}, we show that for  a large class of inhomogeneous self-similar sets there does not exist a doubling measure supported on the set  with  upper Minkowski dimension equal to that of the set.  We emphasize that by \emph{inhomogeneous self-similar set}, we do not refer to self-similar sets, but a generalization due to Barnsley and Demko \cite{BarnsleyDemko1985} which incorporate a given `condensation' set

\section{Packing dimension}

The \emph{upper} and \emph{lower packing dimensions} of $\mu$ are
\begin{align*}
  \udimp(\mu) &= \inf\{ \dimp(A) : A\subset X\text{ is a Borel set such that }\mu(X \setminus A) = 0 \}, \\ 
  \ldimp(\mu) &= \inf\{ \dimp(A) : A\subset X\text{ is a Borel set such that }\mu(A) > 0 \},
\end{align*}
respectively, where $\dimp(A)$ is the packing dimension of $A \subset X$;  see  \cite[\S 10.1]{Falconer1997} and  \cite[\S 3.5]{Falconer2014}. 
It is well-known that the packing dimension of $X$ can be approximated arbitrary well from below by upper and lower packing dimensions of measures; see \cite[Proposition 10.1]{Falconer1997}. Since the question whether the suprema can be attained here does not seem to be so well documented, we present the full details in the following.

\begin{theorem} \label{thm:packing-dim-below}
  If $X$ is an analytic subset of a metric space, then
  \begin{align*}
    \dimp(X) &= \max\{\udimp(\mu) : \mu\text{ is a finite Borel measure on }X\} \\
    &= \sup\{\ldimp(\mu) : \mu\text{ is a finite Borel measure on }X\}.
  \end{align*}
\end{theorem}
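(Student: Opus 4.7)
The plan is to split the argument into two directions. The easy inequalities $\udimp(\mu) \le \dimp(X)$ and $\ldimp(\mu) \le \dimp(X)$, valid for every nonzero finite Borel measure $\mu$ on $X$, follow immediately by taking $A = X$ in both definitions: $X$ itself is Borel with $\mu(X\setminus X) = 0$ and $\mu(X) > 0$, so $\dimp(X)$ lies in each of the sets whose infimum defines $\udimp(\mu)$ and $\ldimp(\mu)$. This yields the $\le$ half of both claimed equalities.

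For the reverse direction I would invoke the Joyce--Preiss subset theorem: since $X$ is analytic, for every $0 < s < \dimp(X)$ there exists a compact subset $E \subset X$ with $0 < \mathcal{P}^s(E) < \infty$, where $\mathcal{P}^s$ denotes the $s$-dimensional packing measure. Set $\mu_s = \mathcal{P}^s|_E$, a finite Borel measure on $X$. For any Borel $A\subset X$ with $\mu_s(A)>0$, one has $\mathcal{P}^s(A\cap E)>0$, hence $\dimp(A)\ge\dimp(A\cap E)\ge s$, and therefore $\ldimp(\mu_s)\ge s$. If instead $\mu_s(X\setminus A) = 0$, then $\mu_s(A\cap E) = \mu_s(E) > 0$, and the same chain of implications gives $\udimp(\mu_s) \ge s$. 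Taking the supremum over $s<\dimp(X)$ establishes the second equality and the $\ge$ half of the first.

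To upgrade $\sup$ to $\max$ in the first equality, I would choose a strictly increasing sequence $s_n \uparrow \dimp(X)$ and define
\begin{equation*}
  \mu = \sum_{n \in \N} 2^{-n}\,\frac{\mu_{s_n}}{\mu_{s_n}(X)},
\end{equation*}
a finite Borel measure on $X$. If $A$ is Borel with $\mu(X\setminus A) = 0$, then $\mu_{s_n}(X\setminus A) = 0$ for every $n$, so the preceding argument gives $\dimp(A) \ge s_n$ for all $n$ and hence $\dimp(A) \ge \dimp(X)$. Combined with the upper bound this yields $\udimp(\mu) = \dimp(X)$, so the maximum is attained.

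The main obstacle is the invocation of the Joyce--Preiss subset theorem, a substantial classical result supplying a compact subset of finite, positive packing measure inside any analytic set of higher packing dimension. A secondary observation worth isolating is that the countable-sum construction preserves the defining property of $\udimp$ but destroys that of $\ldimp$: each summand $\mu_{s_n}$ assigns positive mass to $E_n$ with $\dimp(E_n)$ close to $s_n$, so $\ldimp$ of the mixture is bounded by $s_1$. This asymmetry is precisely why the statement delivers a maximum for $\udimp$ but only a supremum for $\ldimp$.
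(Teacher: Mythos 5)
Your proposal is correct and follows essentially the same route as the paper: the Joyce--Preiss subset theorem to produce compact sets of positive finite packing measure, normalized restricted packing measures to handle the supremum for $\ldimp$, and the weighted countable sum $\sum_n 2^{-n}\mu_{s_n}$ to attain the maximum for $\udimp$. Your closing observation about why the mixture only works for $\udimp$ and not $\ldimp$ matches the asymmetry the paper illustrates in Example \ref{ex:lower-strict}.
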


\begin{proof}
  Write $s_n = \dimp(X)-\tfrac{1}{n}$ for all $n \in \N$. For every $n \in \N$, by the result of Joyce and Preiss \cite[Theorem 1]{JoycePreiss1995}, there exists a compact set $K_n \subset X$ such that $0<\PP^{s_n}(K_n)<\infty$, where $\PP^s$ is $s$-dimensional packing measure; see Cutler \cite{Cutler1995b} or \cite[\S 3.5]{Falconer2014} for the definition. Define
  \begin{equation*}
    \mu_n = \frac{\PP^{s_n}|_{K_n}}{\PP^{s_n}(K_n)} \quad\text{and}\quad \mu=\sum_{n \in \N}2^{-n}\mu_n,
  \end{equation*}
  and note that $\mu$ is a Borel probability measure.

  To show the first equality, let $A \subset X$ be a Borel set with $\mu(X \setminus A)=0$. Since $1=\mu(A)=\sum_{n\in\N}2^{-n}\mu_n(A)$, we have $\mu_n(A)=1$ and $\PP^{s_n}(K_n \cap A)=\PP^{s_n}(K_n)$ for all $n \in \N$. Therefore, $\PP^{s_n}(A)\ge\PP^{s_n}(K_n\cap A)=\PP^{s_n}(K_n)>0$ and $\dimp(A)\ge s_n = \dimp(X)-\tfrac{1}{n}$ for all $n \in \N$. It follows that $\dimp(A)=\dimp(X)$ and hence, $\udimp(\mu)=\dimp(X)$.

  To see the second equality, fix $n \in \N$ and let $A \subset X$ be a Borel set such that $\mu_n(A)>0$. Since $\PP^{s_n}(A) \ge \PP^{s_n}(K_n\cap A)=\mu_n(A)\PP^{s_n}(K_n)>0$, we have $\dimp(A) \ge s_n = \dimp(X)-\tfrac{1}{n}$ and hence, $\ldimp(\mu_n) \ge \dimp(X)-\tfrac{1}{n}$ giving the claim.
\end{proof}

By relying on the result of Davies \cite{Davies1952}, see also Rogers \cite{Rogers1998}, or Howroyd \cite{Howroyd1995}, it is possible to modify Theorem \ref{thm:packing-dim-below} for the Hausdorff dimension. The following example can also be easily modified for the Hausdorff dimension.

\begin{figure}[t]
  \begin{tikzpicture}[scale=0.068]
    \draw (0,0) -- (0,81) -- (81,81) -- (81,0) -- cycle;

    \filldraw[fill=black!10!white, draw=black] (54,0) rectangle (81,27);
    \filldraw[fill=black!10!white, draw=black] (54,54) rectangle (81,81);
    \filldraw[fill=black!10!white, draw=black] (0,54) rectangle (27,81);

    \filldraw[fill=black!20!white, draw=black] (18,0) rectangle (27,9);
    \filldraw[fill=black!20!white, draw=black] (18,18) rectangle (27,27);
    \filldraw[fill=black!20!white, draw=black] (0,18) rectangle (9,27);

    \filldraw[fill=black!30!white, draw=black] (6,0) rectangle (9,3);
    \filldraw[fill=black!30!white, draw=black] (6,6) rectangle (9,9);
    \filldraw[fill=black!30!white, draw=black] (0,6) rectangle (3,9);

    \filldraw[fill=black!40!white, draw=black] (2,0) rectangle (3,1);
    \filldraw[fill=black!40!white, draw=black] (2,2) rectangle (3,3);
    \filldraw[fill=black!40!white, draw=black] (0,2) rectangle (1,3);

    \filldraw[fill=black!50!white, draw=black] (0.666,0) rectangle (1,0.333);
    \filldraw[fill=black!50!white, draw=black] (0.666,0.666) rectangle (1,1);
    \filldraw[fill=black!50!white, draw=black] (0,0.666) rectangle (0.333,1);

    \filldraw[fill=black!60!white, draw=black] (0,0) rectangle (0.333,0.333);
  \end{tikzpicture}
  \caption{Illustration for the set $X$ in Example \ref{ex:lower-strict}.}
  \label{fig:illustration}
\end{figure}
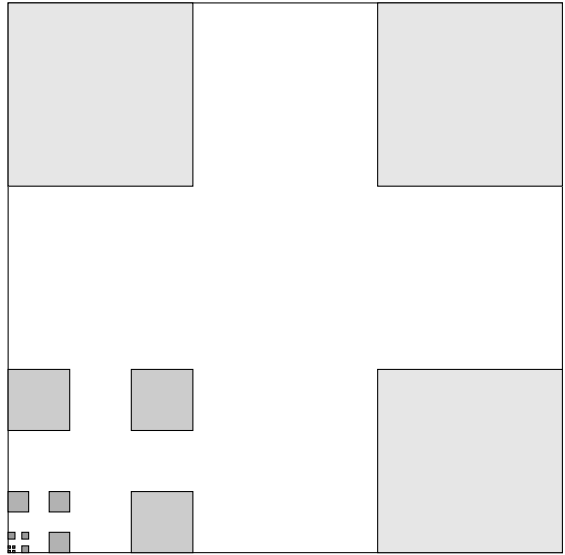

\begin{example} \label{ex:lower-strict}
  In this example, we exhibit a compact set $X \subset \R^2$ for which
  \begin{equation*}
    \dimp(X) > \ldimp(\mu)
  \end{equation*}
  for all finite Borel measures on $X$. Let $0<s\le 2$ and $s_n = s(1-\tfrac{1}{2n}) > 0$ for all $n \in \N$. For each $i \in \{1,\ldots, 4\}$ define a map $\fii_i \colon \R^2 \to \R^2$ by setting
  \begin{equation*}
    \fii_i(x) = \frac{x+t_i}{3},
  \end{equation*}
  where $t_1 = (0,0)$, $t_2 = (0,2)$, $t_3 = (2,2)$, and $t_4 = (2,0)$. Write $\fii_\iii = \fii_{i_1} \circ \cdots \circ \fii_{i_k}$ for all $\iii = i_1\cdots i_k \in \{1,\ldots,4\}^k$ and $k \in \N$. Denote the element $1 \cdots 1$ of $\{1,\ldots,4\}^k$ consisting only of $1$s by $\jjj_k$. Let $X_n \subset \R^2$ be a compact set with $\dimp(X_n) = s_n$ for all $n \in \N$. Define
  \begin{equation*}
    X = \{0\} \cup \bigcup_{k=0}^\infty \bigcup_{i \in \{2,3,4\}} \fii_{\jjj_ki}(X_{k+1});
  \end{equation*}
  see Figure \ref{fig:illustration} for illustration. Observe that $X \subset \R^2$ is compact and, as it contains $s_n$-dimensional subsets, $\dimp(X) \ge s_n$ for all $n \in \N$ and hence $\dimp(X) \ge s$. Let $\mu$ be a finite Borel measure on $X$. If $\mu(X \setminus B(0,r)) = 0$ for all $r>0$, then $\mu$ is supported at the origin and therefore, has dimension zero. But if there is $r>0$ such that $\mu(X \setminus B(0,r)) > 0$, then, by choosing $A = X \setminus B(0,r)$, we have $\mu(A)>0$ and $\dimp(A) \le s_n < s$ for some $n \in \N$. Therefore, $\ldimp(\mu) < s$ as claimed.
\end{example}

Let us next examine whether there exists a result analogous to Theorem \ref{thm:minkowski-dim} for the packing dimension. Define the \emph{lower $s$-density} of $\mu$ at $x \in X$ by
\begin{equation*}
  \Theta^s_*(\mu,x) = \liminf_{r \downarrow 0} \frac{\mu(B(x,r))}{(2r)^s}
\end{equation*}
and notice that, as a function of $s$, it is increasing.

\begin{lemma} \label{thm:antifrostman}
  If $X$ is a compact doubling metric space and $\dimp(X) < s$, then there exists a fully supported finite Borel measure $\mu$ on $X$ such that
  \begin{equation*}
    \Theta^s_*(\mu,x) > 0
  \end{equation*}
  for all $x \in X$.
\end{lemma}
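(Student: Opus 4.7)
The desired conclusion $\Theta^s_*(\mu,x)>0$ is really a scale-free lower bound of the form $\mu(B(x,r))\gtrsim r^s$, which is exactly the sort of estimate controlled by the upper Minkowski dimension of a measure. Since one can have $\dimp(X)<\udimm(X)$, Theorem~\ref{thm:minkowski-dim} does not apply to $X$ directly; instead the strategy is to break $X$ into countably many pieces of upper Minkowski dimension strictly less than $s$, apply Theorem~\ref{thm:minkowski-dim} on each piece, and combine the resulting measures.

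First I would fix $t\in(\dimp(X),s)$ and invoke the standard characterization of packing dimension as the modified upper box-counting dimension,
\[
\dimp(X)\;=\;\inf\Bigl\{\sup_{i\in\N}\udimm(X_i):X\subset\bigcup_{i\in\N}X_i\Bigr\},
\]
see \cite[\S 3.5]{Falconer2014}. This yields a countable cover $X\subset\bigcup_{i\in\N}X_i$ with $\udimm(X_i)<t$ for every $i$. Replacing each $X_i$ by the closure of $X_i\cap X$ (which does not increase upper Minkowski dimension and keeps the cover inside $X$), I may assume every $X_i$ is a compact subset of $X$ with $\udimm(X_i)<t$ and $\bigcup_i X_i = X$.

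Next, Theorem~\ref{thm:minkowski-dim} supplies, for each $i\in\N$, a fully supported finite Borel measure $\mu_i$ on $X_i$ with $\udimm(\mu_i)=\udimm(X_i)<t$; by definition of $\udimm(\mu_i)$ there exists $c_i>0$ with $\mu_i(B(x,r))\ge c_i r^t$ for all $x\in X_i$ and $0<r<1$. I would then set
\[
\mu \;=\; \sum_{i\in\N}\frac{2^{-i}}{\mu_i(X_i)}\,\mu_i,
\]
which is a Borel probability measure on $X$. Since $\operatorname{supp}(\mu_i)=X_i$ for every $i$ and $\bigcup_i X_i=X$, the measure $\mu$ is fully supported on $X$. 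Given $x\in X$, pick $i$ with $x\in X_i$; for every $0<r<1$,
\[
\mu(B(x,r))\;\ge\;\frac{2^{-i}}{\mu_i(X_i)}\,\mu_i(B(x,r))\;\ge\;\frac{2^{-i}c_i}{\mu_i(X_i)}\,r^t,
\]
and since $t<s$,
\[
\Theta^s_*(\mu,x)\;\ge\;\frac{2^{-i}c_i}{2^s\,\mu_i(X_i)}\liminf_{r\downarrow 0}r^{\,t-s}\;=\;\infty\;>\;0.
\]

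The main subtlety to watch is that the constants $c_i$ and masses $\mu_i(X_i)$ depend on $i$, so any attempt to extract a uniform lower density bound would fail; however, the conclusion is purely pointwise, so each $x$ only needs the single piece $X_i$ containing it, and the extra slack $t<s$ absorbs the index-dependent constants as $r\downarrow 0$. The doubling assumption enters only indirectly, by keeping all the Minkowski dimensions appearing in the argument finite, so that the hypothesis $\dimp(X)<s$ gives a non-vacuous range $(\dimp(X),s)$ for the auxiliary exponent $t$.
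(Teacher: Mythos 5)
Your proposal is correct and follows essentially the same route as the paper: decompose $X$ into countably many compact pieces of upper Minkowski dimension less than $s$ (the modified box-dimension characterisation of packing dimension), apply Theorem~\ref{thm:minkowski-dim} on each piece, and sum the normalised measures with weights $2^{-i}$. The only cosmetic difference is your use of an intermediate exponent $t<s$, which makes the lower density infinite rather than merely positive; the paper works with $s$ directly.
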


\begin{proof}
  By \cite[\S 5.9]{Mattila1995}, $X$ has a cover $\{X_n\}_{n \in \N}$ of compact sets such that $X_n \subset X$ and $\udimm(X_n) < s$ for all $n \in \N$. Therefore, for each $n \in \N$, by Theorem \ref{thm:minkowski-dim}, there exist a fully supported Borel probability measure $\mu_n$ on $X_n$ and a constant $c_n>0$ such that
  \begin{equation*}
    \mu_n(B(x,r)) \ge c_nr^s
  \end{equation*}
  for all $x \in X_n$ and $0<r<1$. The measure $\mu = \sum_{n \in \N} 2^{-n}\mu_n$ is a fully supported Borel probability measure on $X$ and satisfies
  \begin{equation*}
    \liminf_{r \downarrow 0} \frac{\mu(B(x,r))}{(2r)^s} \ge \liminf_{r \downarrow 0} \frac{\sum_{k \in \{n\in\N:x\in X_n\}}2^{-k}c_kr^s}{(2r)^s} > 0
  \end{equation*}
  for all $x \in X$.
\end{proof}

Define the \emph{density dimension} of $\mu$ to be
\begin{equation*}
  \dimd(\mu) = \inf\{ s \ge 0 : \Theta^s_*(\mu,x)>0 \text{ for all } x\in X \}.
\end{equation*}
Note that $\dimd(\mu) \le \udimm(\mu)$ for all measures $\mu$. The following example shows that the inequality can be strict.

\begin{example} \label{ex:anti-frostman-strict}
  In this example, we exhibit a compact set $X \subset \R$ and a fully supported finite Borel measure $\mu$ on $X$ for which
  \begin{equation*}
    \dimd(\mu) < \udimm(\mu).
  \end{equation*}
  Let $X = \{0\} \cup \{1/n\}_{n \in \N}$ and define
  \begin{equation*}
    \mu = \delta_{0} + \sum_{n=1}^\infty \frac{\delta_{1/n}}{n^2},
  \end{equation*}
  where $\delta_x$ is the Dirac mass at $x$. Notice that $\mu$ is clearly fully supported and $\mu(X) = 1+\sum_{n=1}^\infty n^{-2} = 1+\pi^2/6<\infty$. Therefore, by Theorem \ref{thm:minkowski-dim}, $\udimm(\mu) \ge \udimm(X) = \tfrac12$. 

  Let $s>0$. Fix $n \in \N$ and choose $0 < r < \min\{ \tfrac12(n^2+n)^{-1}, n^{-2/s} \}$. Notice that the ball $B(\tfrac{1}{n}, r)$ contains only the centre point $\tfrac{1}{n}$. Therefore,
  \begin{equation*}
    \mu(B(\tfrac{1}{n}, r)) = \mu(\{\tfrac{1}{n}\}) = n^{-2} \ge r^{s}.
  \end{equation*}
  Since also $\mu(B(0,r)) \ge 1 \ge r^s$, we have shown that $\Theta^s_*(\mu,x) > 0$ for all $x \in X$ and $s > 0$. Therefore, $\dimd(\mu) = 0$.
\end{example}

The following theorem, which generalizes the result of Cutler \cite[Lemma 3.3]{Cutler1995}, is analogous to Theorem \ref{thm:minkowski-dim}. In fact, Theorems \ref{thm:minkowski-dim} and \ref{thm:packing-dim} together show that any set with packing dimension strictly less than upper Minkowski dimension supports finite Borel measures satisfying the property described in Example \ref{ex:anti-frostman-strict}.

\begin{theorem} \label{thm:packing-dim}
  If $X$ is a compact doubling metric space, then
  \begin{equation*}
    \dimp(X) = \min\{\dimd(\mu):\mu\text{ is a fully supported finite Borel measure on }X\}.
  \end{equation*}
\end{theorem}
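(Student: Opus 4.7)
The plan is to prove two inequalities. First, I show that $\dimp(X) \leq \dimd(\mu)$ for every fully supported finite Borel measure $\mu$ on $X$; then I use Lemma \ref{thm:antifrostman} to construct a measure that actually attains $\dimp(X)$, so the infimum is in fact a minimum.

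For the first step, fix such a $\mu$ and choose any $s > \dimd(\mu)$. By the definition of $\dimd$, $\Theta^s_*(\mu,x) > 0$ for every $x \in X$, so the increasing family
\[
A_k = \{x \in X : \mu(B(x,r)) \geq k^{-1} r^s \text{ for every } 0<r<1/k\}, \qquad k \in \N,
\]
covers $X$. For every $r$-packing $\{B(x_i,r)\}_{i=1}^N$ with centres in $A_k$ and $0 < r < 1/k$, disjointness combined with the defining property of $A_k$ yields $N k^{-1} r^s \leq \sum_{i=1}^N \mu(B(x_i,r)) \leq \mu(X)$, so $N_r(A_k) \leq k\mu(X) r^{-s}$ and hence $\udimm(A_k) \leq s$. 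Since the packing dimension is countably stable and dominated by the upper Minkowski dimension,
\[
\dimp(X) = \dimp\Bigl( \bigcup_{k \in \N} A_k \Bigr) \leq \sup_{k \in \N} \dimp(A_k) \leq \sup_{k \in \N} \udimm(A_k) \leq s.
\]
Letting $s \downarrow \dimd(\mu)$ gives $\dimp(X) \leq \dimd(\mu)$.

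For the second step, for each $n \in \N$ set $s_n = \dimp(X) + 1/n$ and apply Lemma \ref{thm:antifrostman} to obtain a fully supported Borel probability measure $\mu_n$ on $X$ with $\Theta^{s_n}_*(\mu_n,x) > 0$ for every $x \in X$. The weighted sum $\mu = \sum_{n \in \N} 2^{-n} \mu_n$ is a fully supported finite Borel measure on $X$. For any $s > \dimp(X)$, pick $n$ with $s_n < s$; from $\Theta^{s_n}_*(\mu_n,x) > 0$ and $s_n < s$ it follows that $\mu_n(B(x,r))/(2r)^s = (2r)^{s_n-s}\mu_n(B(x,r))/(2r)^{s_n} \to +\infty$ as $r \downarrow 0$, so $\Theta^s_*(\mu_n,x) = +\infty$ and consequently $\Theta^s_*(\mu,x) \geq 2^{-n} \Theta^s_*(\mu_n,x) > 0$ for every $x \in X$. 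Thus $\dimd(\mu) \leq s$ for every $s > \dimp(X)$, giving $\dimd(\mu) \leq \dimp(X)$. Combined with the first step this forces $\dimd(\mu) = \dimp(X)$, so the infimum in the statement is attained.

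The only technical point is the upper Minkowski estimate $\udimm(A_k) \leq s$, which turns critically on the uniform-in-$r$ lower bound built into the definition of $A_k$; everything else reduces to the standard facts that $\dimp$ is countably stable and dominated by $\udimm$, together with the elementary monotonicity of $s \mapsto \Theta^s_*(\mu,x)$.
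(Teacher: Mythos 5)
Your proof is correct, and its second half (attainment of the minimum via Lemma \ref{thm:antifrostman} and the weighted sum $\mu=\sum_{n}2^{-n}\mu_n$) coincides with the paper's argument. Where you genuinely diverge is in the inequality $\dimp(X)\le\dimd(\mu)$: the paper disposes of this in one line by citing Cutler's density theorem for packing measures in general metric spaces, whereas you reprove the special case needed here from scratch, decomposing $X=\bigcup_{k}A_k$ into sets on which the lower density bound is uniform in $r$, bounding $N_r(A_k)$ by a mass-distribution argument, and invoking countable stability of $\dimp$ together with $\dimp\le\udimm$. This makes the proof self-contained (and, incidentally, shows that neither compactness nor doubling is used in this direction), at the cost of rederiving standard density-theorem machinery. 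One small point to tidy: an $r$-packing of $A_k$ in the restriction metric consists of balls that are pairwise disjoint only as subsets of $A_k$, so you cannot immediately write $\sum_{i}\mu(B(x_i,r))\le\mu(X)$ for the full balls in $X$. However, disjointness in $A_k$ together with $x_j\in A_k$ forces the centres to be $r$-separated, so the balls $B(x_i,r/2)$ are disjoint in $X$ and $\mu(B(x_i,r/2))\ge k^{-1}(r/2)^s$, yielding $N_r(A_k)\le 2^{s}k\mu(X)r^{-s}$ and the same conclusion $\udimm(A_k)\le s$.
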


\begin{proof}
  Let us first show the claim with minimum replaced by infimum.
%
%
  If $\mu$ is a fully supported finite Borel measure on $X$ and $\dimd(\mu) < s$, then $\Theta^s_*(\mu,x)>0$ for all $x \in X$ and thus, by the result of Cutler \cite[Theorem 3.16]{Cutler1995b}, $\dimp(X) \le s$. On the other hand, if $\dimp(X)<s$, then, by Lemma \ref{thm:antifrostman}, there exists a fully supported finite Borel measure $\mu$ on $X$ such that $\Theta^s_*(\mu,x) > 0$ for all $x \in X$ and hence, $\dimd(\mu) \le s$.

  Let us now show that there exists a measure whose density dimension achieves $\dimp(X)$. Write $s_n = \dimp(X) + \tfrac{1}{n}$ for all $n \in \N$. By
  the first part of the proof, there exists a fully supported finite Borel measure on $X$ such that $\dimd(\mu_n) < s_n$ and therefore, $\Theta^{s_n}_*(\mu_n,x)>0$ for all $x \in X$ and $n \in \N$. Define
  \begin{equation*}
    \mu = \sum_{n \in \N} 2^{-n} \mu_n
  \end{equation*}
  and notice that, as in the proof of Lemma \ref{thm:antifrostman}, $\Theta^{s_n}_*(\mu,x)>0$ for all $x \in X$ and $n \in \N$. Hence, $\dimd(\mu) \le s_n = \dimp(X) + \tfrac{1}{n}$ for all $n \in \N$ yielding $\dimd(\mu) = \dimp(X)$ as required.
\end{proof}

We remark that, by relying on the result of Cutler \cite[Lemma 3.5]{Cutler1995}, it is possible to establish an analogue of  Theorem \ref{thm:packing-dim} for Hausdorff dimension.

\section{Assouad spectrum and $L^q$-dimensions}

Recall that if $q \in \R$, then the \emph{$L^q$-spectrum} of $\mu$ is
\begin{equation*}
  \tau_q(\mu) = \liminf_{r \downarrow 0} \frac{\log M_q(\mu,r)}{\log r},
\end{equation*}
where
  $M_q(\mu,r) = \sup\{\sum_{B \in \BB} \mu(B)^q : \BB \text{ is an $r$-packing of } X\}$,
and the \emph{$L^q$-dimension} of $\mu$ is
\begin{equation*}
  \dim_{L^q}(\mu) = \frac{\tau_q(\mu)}{q-1}
\end{equation*}
for $q \neq 1$. It is well known that $\ldimp(\mu) \le \udimp(\mu) \le \dim_{L^q}(\mu)$ for all $-\infty<q<1$; see \cite[Theorem 3.1]{KaenmakiRajalaSuomala2016} and references therein.

Following K\"aenm\"aki, Lehrb\"ack, and Vuorinen \cite{KaenmakiLehrbackVuorinen2013}, see also \cite{Fraser2021}, we define the \emph{Assouad dimension} of $\mu$ to be
\begin{align*}
  \udimreg(\mu) = \inf\{ s \ge 0 : \;&\text{there exists a constant $c>0$ such that } \\
  &\frac{\mu(B(x,r))}{\mu(B(x,R))} \ge c\Bigl( \frac{r}{R} \Bigr)^s \text{ for all $x \in X$ and $0<r<R<1$} \}.
\end{align*}
It is easy to see that $\udimreg(\mu)<\infty$ if and only if $\mu$ is doubling; see \cite[Lemma 3.2]{JJKRRS2010} and \cite[Proposition 3.1]{FraserHowroyd2020}. Although the property this definition captures has been studied earlier (see e.g.\ \cite[\S 13]{Heinonen2001}), the Assouad dimension of measures was explicitly defined in \cite{KaenmakiLehrbackVuorinen2013} where it was called upper regularity dimension. Recall also that, by the result of Fraser and Howroyd \cite[Theorem 2.1]{FraserHowroyd2020}, we have $\dim_{L^q}(\mu) \le \udimreg(\mu)$ for all $-\infty < q < 1$. Following Hare and Troscheit \cite{HareTroscheit2019}, we define the \emph{Assouad spectrum} of the measure $\mu$ by setting
\begin{align*}
  \udimreg[\theta](\mu) = \inf\{ s \ge 0 : \;&\text{there exists a constant $0 < c \le 1$ such that } \\
  &\frac{\mu(B(x,r))}{\mu(B(x,r^\theta))} \ge c\Bigl( \frac{r}{r^\theta} \Bigr)^s \text{ for all $x \in X$ and } 0<r<1 \}
\end{align*}
for all $0<\theta<1$. It follows immediately from the definitions that $\udimreg[\theta](\mu) \le \udimreg(\mu)$ for all $0<\theta<1$ and that $\udimm(\mu) \leq \udimreg(\mu)$. The role of the parameter $\theta$, as the following proposition shows, is to introduce a spectrum of dimensions having values between the upper Minkowski dimension and the Assouad dimension.

\begin{proposition} \label{thm:basic-bounds}
  If $X$ is a compact metric space and $\mu$ is a fully supported finite Borel measure on $X$, then
  \begin{equation*}
    \udimp(\mu) \le \dim_{L^q}(\mu) \le \udimm(\mu) \le \udimreg[\theta](\mu) \le \min\biggl\{ \udimreg(\mu), \frac{\udimm(\mu)}{1-\theta} \biggr\}
  \end{equation*}
  for all $-\infty < q < 1$ and $0<\theta<1$. Moreover, $\udimm(\mu) = \lim_{\theta \downarrow 0} \dima[\theta](\mu)$.
\end{proposition}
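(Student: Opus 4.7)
The plan is to establish the chain of inequalities from left to right, and then derive the limit identity as an immediate consequence of the two bounds sandwiching $\udimreg[\theta](\mu)$. The opening bound $\udimp(\mu)\le\dim_{L^q}(\mu)$ is already cited from \cite[Theorem 3.1]{KaenmakiRajalaSuomala2016}, so no extra work is needed there.

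For $\dim_{L^q}(\mu)\le\udimm(\mu)$, I would fix $s>\udimm(\mu)$ so that $\mu(B(x,r))\ge cr^s$ and use the fact that any $r$-packing $\BB$ satisfies $\#\BB\le\mu(X)/(cr^s)$ by disjointness. Combining this with $\mu(B)^q\le c^q r^{sq}$ (when $q\le 0$) or Jensen's inequality applied to the concave map $t\mapsto t^q$ (when $0<q<1$) should give $\sum_{B\in\BB}\mu(B)^q\le Cr^{s(q-1)}$; this yields $\tau_q(\mu)\ge s(q-1)$, and dividing by $q-1<0$ gives $\dim_{L^q}(\mu)\le s$, after which I let $s\downarrow\udimm(\mu)$.

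The main obstacle is the inequality $\udimm(\mu)\le\udimreg[\theta](\mu)$, which requires iterating the Assouad-spectrum estimate. Fixing $s>\udimreg[\theta](\mu)$ with corresponding constant $c\in(0,1]$, applying the defining inequality successively to $\rho=r,r^\theta,r^{\theta^2},\ldots$ and using $\sum_{j=0}^{k-1}\theta^j=(1-\theta^k)/(1-\theta)$ should yield
\begin{equation*}
  \mu(B(x,r))\ge c^k\,\mu(B(x,r^{\theta^k}))\,r^{s(1-\theta^k)}.
\end{equation*}
By compactness, full support, and lower semicontinuity of $x\mapsto\mu(B(x,r_0))$, some $m:=\inf_{x\in X}\mu(B(x,r_0))>0$ exists for any fixed $r_0\in(0,1)$. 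I would choose $k=k(r)$ to be the least integer with $r^{\theta^k}\ge r_0$, which gives $k(r)\le C_0\log\log(1/r)$. Since $r^{s(1-\theta^k)}\ge r^s$ and $c^{k(r)}\ge(\log(1/r))^{\log c/|\log\theta|}$, the delicate point is to observe that this logarithmic prefactor dominates $r^\epsilon$ for every fixed $\epsilon>0$ as $r\downarrow 0$, so it can be absorbed into the exponent to give $\mu(B(x,r))\ge m'r^{s+\epsilon}$ for all small $r$, and uniformly on $(0,1)$ by handling $r\in[r_0,1)$ with the constant $m$ directly. Sending $\epsilon\downarrow 0$ then closes the gap.

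Finally, $\udimreg[\theta](\mu)\le\udimreg(\mu)$ is immediate from comparing definitions, and for $\udimreg[\theta](\mu)\le\udimm(\mu)/(1-\theta)$ I would fix $s>\udimm(\mu)$ and estimate
\begin{equation*}
  \frac{\mu(B(x,r))}{\mu(B(x,r^\theta))}\ge\frac{cr^s}{\mu(X)}=\frac{c}{\mu(X)}\biggl(\frac{r}{r^\theta}\biggr)^{s/(1-\theta)},
\end{equation*}
which directly gives $\udimreg[\theta](\mu)\le s/(1-\theta)$. The limit identity $\udimm(\mu)=\lim_{\theta\downarrow 0}\udimreg[\theta](\mu)$ then follows by squeezing with $\udimm(\mu)\le\udimreg[\theta](\mu)\le\udimm(\mu)/(1-\theta)$ and letting $\theta\downarrow 0$. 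The whole argument is routine modulo the iteration step, which is where the care is required.
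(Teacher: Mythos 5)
Your argument is correct and follows essentially the same route as the paper's: the same packing estimate for the $L^q$ bound, the same telescoping iteration through the scales $r, r^\theta, r^{\theta^2},\ldots$ combined with a uniform lower bound on ball measures for $\udimm(\mu)\le\udimreg[\theta](\mu)$, and the same reparametrisation of exponents for the final bound and the squeeze. The one slip is the appeal to lower semicontinuity of $x\mapsto\mu(B(x,r_0))$: for closed balls this map is upper, not lower, semicontinuous, so positivity of its infimum over a compact set does not follow for that reason alone. The conclusion $\inf_{x\in X}\mu(B(x,r_0))>0$ is nevertheless true; either pass to open balls, for which the map is lower semicontinuous and everywhere positive by full support, or argue as the paper does by covering $X$ with finitely many balls of radius $r_0/2$ and noting that each must carry positive measure.
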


\begin{proof}
  Counting from left to right, the first inequality was already stated and referred to above. Let us show the second inequality. Fix $-\infty < q < 1$, choose $\udimm(\mu) < s$, and let $\BB$ be an $r$-packing. Since $\mu(B) \ge cr^{s}$ for all $B \in \BB$ where $c>0$ is a constant,
  \begin{equation*}
    \sum_{B \in \BB} \mu(B)^q = \sum_{B \in \BB} \mu(B)\mu(B)^{q-1} \le c^{q-1}\sum_{B \in \BB} \mu(B)r^{s(q-1)} \le c^{q-1}\mu(X)r^{s(q-1)}.
  \end{equation*}
  This implies $M_q(\mu,r) \le c^{q-1}\mu(X)r^{s(q-1)}$ for all $0<r<1$ and $\tau_q(\mu) \ge s(q-1)$. Hence $\dim_{L^q}(\mu) \le s$ as claimed.

  To show the third inequality, fix $0<\theta<1$ and let $t>s>\udimreg[\theta](\mu)$. This means that there is $0<c<1$ such that
  \begin{equation} \label{eq:basic-bounds1}
    \frac{\mu(B(x,r))}{\mu(B(x,r^\theta))} \ge cr^{(1-\theta)s}
  \end{equation}  
  for all $x \in X$ and $0<r<1$. Since $X$ is compact and $\mu$ is fully supported, there exists $\gamma > 0$ such that
  \begin{equation} \label{eq:basic-bounds2}
    \mu(B(x,\tfrac12)) \ge \gamma
  \end{equation}
  for all $x \in X$. Indeed, if this was not the case, then there would exist a sequence $(x_n)_{n \in \N}$ of points in $X$ such that $\mu(B(x_n,\tfrac12))<\tfrac1n$ for all $n \in \N$. By compactness, $X$ can be covered by finitely many balls of radius $\tfrac14$. If $B$ is one of the covering balls and contains infinitely many points $x_{n_1}, x_{n_2}, \ldots$ from the sequence $(x_n)_{n \in \N}$, then $\mu(B) \le \mu(B(x_{n_i},\tfrac12)) \le \tfrac{1}{n_i}$ for all $i \in \N$ and, consequently, $\mu(B)=0$. This cannot be the case since $\mu$ is fully supported and therefore, the sequence $(x_n)_{n \in \N}$ can contain only finitely many distinct points. But this means that there is a point $x$ appearing infinitely often in the sequence $(x_n)_{n \in \N}$ and therefore, $\mu(B(x,\tfrac12)) = 0$. This contradiction proves \eqref{eq:basic-bounds2}.

  Fix $0<r<1$ and choose $k \in \N$ such that $r^{\theta^{k-1}} < \tfrac12 \le r^{\theta^k}$. This implies
  \begin{equation} \label{eq:basic-bounds3}
    k < \frac{\log(\frac{\log 2}{-\log r})}{\log\theta}+1 \quad \text{and} \quad r^{-\theta^ks} > 2^{\theta s}.
  \end{equation}
  Now, by \eqref{eq:basic-bounds1}, the fact that $r^{\theta^k} \ge \tfrac12$, \eqref{eq:basic-bounds2}, and \eqref{eq:basic-bounds3},
  \begin{align*}
    \mu(B(x,r)) &= \frac{\mu(B(x,r))}{\mu(B(x,r^\theta))} \frac{\mu(B(x,r^\theta))}{\mu(B(x,r^{\theta^2}))} \cdots \frac{\mu(B(x,r^{\theta^{k-1}}))}{\mu(B(x,r^{\theta^k}))} \mu(B(x,r^{\theta^k})) \\
    &\ge c^k r^{(1-\theta)s} r^{\theta(1-\theta)s} \cdots r^{\theta^{k-1}(1-\theta)s} \mu(B(x,\tfrac12)) \\
    &\ge c\Bigl( \frac{\log 2}{-\log r} \Bigr)^{\frac{\log c}{\log \theta}} r^{(1-\theta^{k})s} \gamma \\
    &\ge c\Bigl( \frac{\log 2}{-\log r} \Bigr)^{\frac{\log c}{\log \theta}} \frac{1}{r^{-(s-t)}} r^t 2^{\theta s} \gamma
  \end{align*}
  for all $x \in X$. Since $(-\log r)^{\log c / \log\theta} r^{-(s-t)} \to 0$ as $r \downarrow 0$, it follows that there is a constant $c' > 0$ such that $\mu(B(x,r)) \ge c' r^t$ for all $x \in X$ and $0<r<1$ and hence, $\udimm(\mu) \le t$ as required.

  Let us then show the fourth inequality. Fix $0<\theta<1$ and observe that $\udimreg[\theta](\mu) \le \udimreg(\mu)$ by definition. Therefore, let $s>\udimm(\mu)/(1-\theta)$. This means that there is $c>0$ such that $\mu(B(x,r)) \ge cr^{(1-\theta)s}$ for all $x \in X$ and $0<r<1$. Since now
  \begin{equation*}
    \frac{\mu(B(x,r))}{\mu(B(x,r^\theta))} \ge c\mu(X)^{-1} r^{(1-\theta)s}
  \end{equation*}
  for all $x \in X$ and $0<r<1$, we get $\udimreg[\theta](\mu) \le s$ as required.
  
The final identity follows by letting $\theta\to 0$ in the third and fourth inequalities.
\end{proof}

In fact, the upper Minkowski dimension of a measure can be expressed in terms of the limiting behaviour of the $L^q$-dimensions.

\begin{proposition} \label{thm:basic-boundsnew}
  If $X$ is a compact metric space and $\mu$ is a fully supported finite Borel measure on $X$, then
  \begin{equation*}
    \udimm(\mu) = \sup_{-\infty < q < 1} \dim_{L^q}(\mu) = \lim_{q \to -\infty} \dim_{L^q}(\mu)
\end{equation*}
\end{proposition}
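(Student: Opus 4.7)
The upper bound $\sup_{q<1}\dim_{L^q}(\mu) \le \udimm(\mu)$ (and $\limsup_{q\to-\infty}\dim_{L^q}(\mu) \le \udimm(\mu)$) is already provided by the inequality $\dim_{L^q}(\mu)\le\udimm(\mu)$ in Proposition \ref{thm:basic-bounds}. The plan is therefore to prove the matching lower bound $\liminf_{q\to-\infty}\dim_{L^q}(\mu) \ge \udimm(\mu)$, which will simultaneously establish both the supremum and the limit formulas.

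Fix $s < \udimm(\mu)$. By the definition of $\udimm(\mu)$, there is no constant $c>0$ for which $\mu(B(x,r)) \ge c r^s$ holds uniformly, so for each $m \in \N$ I can choose $x_m \in X$ and $r_m \in (0,1)$ with $\mu(B(x_m,r_m)) < r_m^s/m$. Before using this, I need to check $r_m \to 0$. If not, pass to a subsequence with $r_m \ge \eta > 0$; cover $X$ by finitely many balls of radius $\eta/2$, note that each such ball is contained in $B(x_m, r_m)$ whenever $x_m$ lies in it, and deduce from the full support of $\mu$ that $\mu(B(x_m, r_m))$ is bounded below by a positive constant, contradicting $\mu(B(x_m,r_m)) < r_m^s/m \le 1/m$.

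With $r_m \to 0$ in hand, the key computation is short. For any $q<0$, the single-ball packing $\{B(x_m,r_m)\}$ is an $r_m$-packing, so
\begin{equation*}
  M_q(\mu, r_m) \ge \mu(B(x_m,r_m))^q > m^{|q|} r_m^{sq}.
\end{equation*}
Taking logarithms and dividing by $\log r_m < 0$ yields
\begin{equation*}
  \frac{\log M_q(\mu,r_m)}{\log r_m} < sq + \frac{|q|\log m}{\log r_m} \le sq,
\end{equation*}
since $\log m \ge 0$ and $\log r_m < 0$. Passing to the liminf over $m$ (which only bounds $\tau_q(\mu)$ from above) gives $\tau_q(\mu) \le sq$, and then dividing by $q-1<0$ reverses the inequality to produce $\dim_{L^q}(\mu) \ge sq/(q-1)$. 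Letting $q \to -\infty$, the ratio $sq/(q-1) \to s$, so $\liminf_{q\to-\infty}\dim_{L^q}(\mu) \ge s$. Since $s < \udimm(\mu)$ was arbitrary, the desired lower bound follows, and combining with the already-noted upper bound completes the proof of both the supremum identity and the existence of the limit as $q \to -\infty$.

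The main subtlety is the verification that $r_m \to 0$; without full support and compactness, one could be trapped at a macroscopic scale and the packing-based lower bound on $M_q(\mu, r_m)$ would be useless. Once that is handled, the rest is a straightforward one-ball-packing trick together with the bookkeeping that the $|q|\log m/\log r_m$ correction term is negative and therefore harmless when bounding $\tau_q(\mu)$ from above.
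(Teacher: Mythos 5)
Your proof is correct and follows essentially the same route as the paper: negate the definition of $\udimm(\mu)$ to find small balls, use the one-ball packing to get $M_q(\mu,r_m)\ge \mu(B(x_m,r_m))^q$, deduce $\tau_q(\mu)\le sq$ and hence $\dim_{L^q}(\mu)\ge sq/(q-1)\to s$, and combine with the upper bound from Proposition \ref{thm:basic-bounds}. Your explicit verification that $r_m\to 0$ (via compactness and full support) is a point the paper's proof passes over silently, so this is if anything slightly more careful.
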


\begin{proof}
  In light of Proposition \ref{thm:basic-bounds} and the fact that  $\dim_{L^q}(\mu)$ is decreasing in $q$, it suffices to prove that $\udimm(\mu) \leq  \lim_{q \to -\infty} \dim_{L^q}(\mu)$.  To this end, let $t< \udimm(\mu)$ and $q<0$.  Therefore, there exist a point $x \in X$ and a sequence $(r_n)_{n \in \N}$ of positive real numbers tending to $0$ such that $\mu(B(x,r_n)) \le r_n^t$ for all $n \in \N$. Since $\{B(x,r_n)\}$ is trivially an $r_n$-packing of $X$, we get
  \begin{equation*}
    M_q(\mu,r_n) \geq \mu(B(x,r_n))^q \geq r_n^{tq}
  \end{equation*}
  for all $n \in \N$ and therefore, $\tau_q(\mu) \leq tq$ and $\dim_{L^q}(\mu)  \geq \frac{tq}{q-1}$. Letting $q \to- \infty$ we see that $\lim_{q \to -\infty} \dim_{L^q}(\mu) \ge t$ which proves the result as the choice of $t< \udimm(\mu)$ was arbitrary.
\end{proof}

Following Assouad \cite{Assouad1983}, we define the \emph{Assouad dimension} of a set $X$ to be
\begin{align*}
  \dima(X) = \inf\{s\ge 0 : \;&\text{there exists a constant $C \ge 1$ such that} \\
  &N_r(B(x,R))\le C\Bigl( \frac{R}{r} \Bigr)^s \text{ for all $x \in X$ and $0<r<R<1$}\}.
\end{align*}
It is easy to see that $\dima(X)<\infty$ if and only if $X$ is doubling. A simple volume argument shows that $\dima(X) \le \udimreg(\mu)$ for all doubling measures $\mu$ on $X$. Vol{$'$}berg and Konyagin \cite[Theorem 4]{VolbergKonyagin1987} have constructed a compact doubling metric space $X$ such that $\dima(X) < \dima(\mu)$ for all fully supported doubling measures $\mu$ on $X$; see also the result of K\"aenm\"aki and Lehrb\"ack \cite[Theorem 5.1]{KaenmakiLehrback2017}. Following Fraser and Yu \cite{FraserYu2016, FraserYu2018}, we define the \emph{Assouad spectrum} of $X$ to be
\begin{align*}
  \dima[\theta](X) &= \limsup_{r \downarrow 0} \frac{\log\sup\{N_r(B(x,r^\theta)):x \in X\}}{(\theta-1)\log r} \\
  &= \inf\{s\ge 0 : \text{there exists a constant $C \ge 1$ such that} \\
  &\qquad\qquad\qquad\; N_r(B(x,r^\theta))\le C\Bigl( \frac{r^\theta}{r} \Bigr)^s \text{ for all $x \in X$ and $0<r<1$}\}
\end{align*}
for all $0<\theta<1$. Recall that, by \cite[Proposition 3.1]{FraserYu2018}, $\udimm(X) = \lim_{\theta \downarrow 0} \dima[\theta](X)$.

\begin{proposition} \label{thm:assouad-spectrum-ineq}
  If $X$ is a doubling metric space and $\mu$ is a fully supported locally finite Borel measure on $X$, then
  \begin{equation*}
    \dima[\theta](X) \le \udimreg[\theta](\mu)
  \end{equation*}
  for all $0<\theta<1$.
\end{proposition}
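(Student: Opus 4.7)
Fix $s > \udimreg[\theta](\mu)$, so there is $c>0$ with $\mu(B(y,r)) \geq c r^{(1-\theta)s}\mu(B(y,r^\theta))$ for all $y \in X$ and $0<r<1$. The plan is to prove directly that $N_r(B(x,r^\theta)) \leq C r^{-(1-\theta)s}$ for a constant $C$ independent of $x$ and of small enough $r$, whence $\dima[\theta](X) \leq s$, and letting $s \downarrow \udimreg[\theta](\mu)$ gives the result. The standard mass-distribution approach would compare something like $\mu(B(\cdot, 2r^\theta))$ with $\mu(B(\cdot, r^\theta/2))$, but since $\mu$ is not assumed doubling such a comparison is unavailable; the main job of the proof will be to arrange the geometry so that this missing doubling is never needed.

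First I would take a maximal $r$-packing $\{B(x_i,r)\}_{i=1}^N$ with $x_i \in B(x,r^\theta)$. The doubling property of $X$ lets me cover $B(x,r^\theta)$ by some uniformly bounded number $C_0$ of balls of radius $r^\theta/4$. A pigeonhole step then produces one such covering ball $B(y_0,r^\theta/4)$ containing at least $N/C_0$ of the centres $x_i$; let $I$ denote the corresponding subset of indices.

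The specific shrinking factor $1/4$ is what makes the estimate work. For each $i \in I$, the triangle inequality yields two inclusions: $B(x_i, r^\theta) \supseteq B(y_0, 3r^\theta/4)$ and, for $r$ small enough that $r \leq r^\theta/4$, $B(x_i, r) \subseteq B(y_0, r^\theta/2)$. Applying the measure spectrum bound at $x_i$ and using the first inclusion gives $\mu(B(x_i,r)) \geq c\, r^{(1-\theta)s}\, \mu(B(y_0, 3r^\theta/4))$. Summing over $i \in I$ and invoking disjointness together with the second inclusion yields
\[
|I|\, c\, r^{(1-\theta)s}\, \mu(B(y_0,3r^\theta/4)) \;\leq\; \sum_{i \in I} \mu(B(x_i,r)) \;\leq\; \mu(B(y_0,r^\theta/2)).
\]
Because $r^\theta/2 < 3r^\theta/4$ one has $B(y_0, r^\theta/2) \subseteq B(y_0, 3r^\theta/4)$, so the ratio of the two $\mu$-quantities is at most $1$, while the full support assumption ensures the denominator is strictly positive. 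This gives $|I| \leq c^{-1} r^{-(1-\theta)s}$, hence $N \leq C_0 c^{-1} r^{-(1-\theta)s}$, as required.

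The obstacle I anticipated was precisely the missing doubling for $\mu$; the resolution, which becomes visible only once one writes out the volume estimate, is to make the two radii appearing in the mass inequality nested, so that the potentially troublesome $\mu$-ratio is automatically $\leq 1$ and cancels. Any fixed shrinking factor strictly less than $1/2$ would serve equally well in place of $1/4$.
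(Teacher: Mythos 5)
Your proof is correct, and it reaches the conclusion by a genuinely different mechanism from the paper's. Both arguments face the same obstacle --- without doubling of $\mu$ one cannot compare $\mu(B(x_i,r^\theta))$ with the measure of the fixed ball containing the packing --- but resolve it on opposite ends. The paper inflates the packing radius: choosing $\lambda>2^{1/\theta}$ it splits the packing into boundedly many subfamilies (via \cite[Lemma 2.1]{KaenmakiRajalaSuomala2016}) for which the balls $B(x_i,\lambda r)$ remain disjoint, and the point of the inflation is that $(\lambda r)^\theta \ge 2r^\theta+2\lambda r$, so the reference ball $B(x_i,(\lambda r)^\theta)$ at the new scale swallows the whole of $B(x,r^\theta+\lambda r)$ and the troublesome ratio becomes $\le 1$. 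You instead shrink the location of the centres: pigeonholing them into a single ball $B(y_0,r^\theta/4)$ forces every $B(x_i,r^\theta)$ to contain the common ball $B(y_0,3r^\theta/4)$, which in turn contains $B(y_0,r^\theta/2)\supseteq\bigcup_{i\in I}B(x_i,r)$, so the ratio $\mu(B(y_0,r^\theta/2))/\mu(B(y_0,3r^\theta/4))$ is automatically $\le 1$. Your route is somewhat more elementary (it needs only the covering form of the doubling property of $X$ plus pigeonhole, rather than the external packing-decomposition lemma) and applies the spectrum hypothesis at the original scale $r$ rather than at $\lambda r$; the price is the loss of the factor $C_0$ from the pigeonhole, which is harmless. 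Two shared technicalities, neither of which is a gap on your side since the paper treats them the same way: the bound is only obtained for $r\le 4^{-1/(1-\theta)}$, which suffices because large $r$ contribute only a constant to $N_r(B(x,r^\theta))$ by doubling of $X$; and the division by $\mu(B(y_0,3r^\theta/4))$ uses that this quantity is positive (full support) and finite, the latter being implicit in the definition of $\udimreg[\theta](\mu)$ exactly as in the paper's division by $\mu(B(x,r^\theta+\lambda r))$.
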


\begin{proof}
  Fix $0<\theta<1$ and let $s>\udimreg[\theta](\mu)$. Then there is $c>0$ such that
  \begin{equation} \label{eq:assouad-spectrum-ineq1}
    \frac{\mu(B(x,r))}{\mu(B(x,r^\theta))} \ge cr^{(1-\theta)s}
  \end{equation}
  for all $x\in X$ and $0<r<1$. Let $x \in X$, choose $\lambda > 2^{1/\theta}$, and fix
  \begin{equation*}
    0<r<r_\theta=\min\biggl\{\biggl(\frac{\lambda^{\theta-1}}{2}-\frac{1}{\lambda}\biggr)^{1/(1-\theta)}, \frac{1}{\lambda}\biggr\}.
  \end{equation*}
  Let $\{B(x_1,r), \ldots, B(x_P,r)\}$ be a packing of $B(x,r^\theta)$ for some $P \in \N$. Since $X$ is doubling, we only need to consider finite packings. By \cite[Lemma 2.1]{KaenmakiRajalaSuomala2016}, we see that $\{1,\ldots,P\}$ can be partitioned into sets $I_1,\ldots, I_M$, where $M \in \N$ depends only on $X$ and $\lambda$, such that each collection $\{B(x_i,\lambda r)\}_{i \in I_j}$ is a packing of $B(x,r^\theta)$. Since
  \begin{equation*}
    2+2\lambda r^{1-\theta} \le 2+2\lambda\biggl(\frac{\lambda^{\theta-1}}{2}-\frac{1}{\lambda}\biggr) = \lambda^\theta
  \end{equation*}
  and
  \begin{equation*}
    B(x_i,\lambda r) \subset B(x,r^\theta+\lambda r) \subset B(x_i,2r^\theta+2\lambda r) \subset B(x_i,(\lambda r)^\theta)
  \end{equation*}
  for all $i \in \N$, we have, by \eqref{eq:assouad-spectrum-ineq1},
  \begin{equation*}
    1 \ge \sum_{i \in I_j} \frac{\mu(B(x_i,\lambda r))}{\mu(B(x,r^\theta+\lambda r))} \ge \sum_{i \in I_j} \frac{\mu(B(x_i,\lambda r))}{\mu(B(x_i,(\lambda r)^\theta))} \ge \# I_j c(\lambda r)^{(1-\theta)s}
  \end{equation*}
  for all $j \in \{1,\ldots,M\}$. Therefore,
  \begin{equation*}
    P = \sum_{j=1}^M \# I_j \le \frac{M}{c(\lambda r)^{(1-\theta)s}} = \frac{M}{c\lambda^{(1-\theta)s}} \Bigl( \frac{r^\theta}{r} \Bigr)^s
  \end{equation*}
  and
  \begin{equation*}
    N_r(B(x,r^\theta)) \le \frac{M}{c\lambda^{(1-\theta)s}} \Bigl( \frac{r^\theta}{r} \Bigr)^s
  \end{equation*}
  for all $x \in X$ and $0<r<r_\theta$. Hence, $\dima[\theta](X) \le s$ as claimed.
\end{proof}

We consider a tuple $\Phi = (\fii_i)_{i=1}^N$, where $N \ge 2$ is an integer, of contracting mappings acting on $\R^d$. The  \emph{invariant set} associated to $\Phi$ is the unique non-empty compact set $X \subset \R^d$ satisfying
\begin{equation*}
  X = \bigcup_{i=1}^N \fii_i(X);
\end{equation*}
see \cite{Hutchinson1981}. Let us now assume that each of the map $\fii_i$ is a \emph{similitude}, i.e.\ satisfies $|\fii_i(x)-\fii_i(y)| = r_i|x-y|$ for some contraction coefficient $0<r_i<1$. In this case, the corresponding invariant set is called \emph{self-similar}. Furthermore, if $C \subset \R^d$ is compact, then the \emph{inhomogeneous self-similar set} with condensation set $C$ associated to $\Phi$ is the unique non-empty compact set $X_C \subset \R^d$ such that
\begin{equation*}
  X_C = C \cup \bigcup_{i=1}^N \fii_i(X_C) = X \cup \bigcup_{\iii \in \Sigma_*} \fii_\iii(C),
\end{equation*}
where $X$ is the self-similar set associated to $\Phi$, see \cite{Barnsley2006,BarnsleyDemko1985}. Here the set $\Sigma_*$ is the set of all finite words $\bigcup_{n \in \N} \Sigma_n$, where $\Sigma_n = \{1,\ldots,N\}^n$ for all $n \in \N$. If $\iii = i_1\cdots i_n \in \Sigma_n$ for some $n \in \N$, then $\sigma^k(\iii) = i_{k+1}\cdots i_n \in \Sigma_{n-k}$ for all $k \in \{0,\ldots,n-1\}$. The set $\Sigma = \{1,\ldots,N\}^\N$ is the set of all infinite words. If $\iii = i_1i_2\cdots \in \Sigma$, then $\iii|_n = i_1\cdots i_n \in \Sigma_n$ for all $n \in \N$. Finally, if $\iii = i_1\cdots i_n \in \Sigma_n$ for some $n \in \N$, then $\fii_\iii = \fii_{i_1} \circ \cdots \circ \fii_{i_n}$.

We say that $\Phi$ satisfies the \emph{condensation open set condition (COSC)} with condensation set $C$ if there exists an open set $U \subset \R^d$ such that $C \subset U \setminus \bigcup_{i=1}^N \overline{\fii_i(U)}$, $\fii_i(U) \subset U$ for all $i \in \{1,\ldots,N\}$, and $\fii_i(U) \cap \fii_j(U) = \emptyset$ whenever $i \ne j$. Without the reference to the condensation set $C$, this is the familiar open set condition which, by \cite{Hutchinson1981}, implies that
\begin{equation} \label{eq:COSC}
  \dimh(X) = \dima(X) = \dimsim(\Phi),
\end{equation}
where the \emph{similitude dimension} $\dimsim(\Phi)$ is the unique number $s \ge 0$ for which $\sum_{i=1}^N r_i^s = 1$.

The following proposition extends the observation of Vol{$'$}berg and Konyagin \cite[Theorem 4]{VolbergKonyagin1987} to the Assouad spectrum. It also shows that in a large class of inhomogeneous self-similar sets there does not exist a doubling measure which achieves the minimum in Theorem \ref{thm:minkowski-dim}.

\begin{proposition} \label{thm:inhomogeneous}
  Let $C \subset \R^d$ be a non-empty compact set and let $\Phi$ be a tuple of contractive similitudes  satisfying the COSC with condensation set $C$.  Suppose that $\dima(C)<\dimsim(\Phi)$.   Then the inhomogeneous self-similar set $X_C$ satisfies
  \begin{equation*}
    \inf_{0<\theta<1}(\dima[\theta](\mu) - \dima[\theta](X_C)) > 0 \quad \text{and} \quad \udimm(X_C) < \udimm(\mu)
  \end{equation*}
  for all doubling measures $\mu$ fully supported on $X_C$.
\end{proposition}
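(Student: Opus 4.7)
The plan is to derive both conclusions from a single quantitative inequality: there exists $\eta_0 > 0$, depending only on the similitude ratios $(r_i)_{i=1}^N$, the doubling constant of $\mu$, and the COSC data, such that $\udimm(\mu) \ge s + \eta_0$ for every doubling measure $\mu$ fully supported on $X_C$, where $s := \dimsim(\Phi)$. The Minkowski statement is then immediate from $\udimm(X_C) = s$. For the Assouad-spectrum statement, Proposition~\ref{thm:basic-bounds} gives $\dima[\theta](\mu) \ge \udimm(\mu) \ge s + \eta_0$, while the standard inhomogeneous identities $\udimm(X_C) = \max\{\udimm(X), \udimm(C)\}$ and $\dima(X_C) = \max\{\dima(X), \dima(C)\}$ (valid under COSC), together with $\dima(C) < s$ and \eqref{eq:COSC}, give $\udimm(X_C) = \dima(X_C) = s$. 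Since $\udimm(X_C) \le \dima[\theta](X_C) \le \dima(X_C)$, also $\dima[\theta](X_C) = s$ for every $0 < \theta < 1$, whence $\dima[\theta](\mu) - \dima[\theta](X_C) \ge \eta_0$ uniformly in $\theta$.

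The core estimate is a uniform mass lower bound on each condensation copy: there is $\alpha \in (0,1)$, depending only on the COSC data and the doubling constant $D$ of $\mu$, such that $\mu(\fii_\iii(C)) \ge \alpha\,\mu(\fii_\iii(X_C))$ for every $\iii \in \Sigma_*$. To see this, pick $y \in \fii_\iii(C)$. The COSC separation between $C$ and $\bigcup_i \overline{\fii_i(U)}$, together with the separation between $C$ and $X$ (both forced by COSC), transported by the similitude $\fii_\iii$, produces a constant $\beta > 0$ independent of $\iii$ with $B(y, \beta r_\iii) \cap X_C \subseteq \fii_\iii(C)$. Meanwhile $\fii_\iii(X_C) \subseteq B(y, 2 r_\iii \diam(X_C))$, and iterating the doubling condition $O(\log_2(\diam(X_C)/\beta))$ times from the smaller ball to the larger ball supplies the uniform $\alpha$.

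Writing $m_\iii = \mu(\fii_\iii(X_C))$, the estimate yields $\sum_{j=1}^N m_{\iii j} \le m_\iii - \mu(\fii_\iii(C)) \le (1-\alpha) m_\iii$ at each cylinder, hence inductively $\sum_{|\iii|=n} m_\iii \le (1-\alpha)^n \mu(X_C)$. Suppose toward contradiction that $\udimm(\mu) < s + \eta'$ for some $\eta' > 0$. Then there exists $c > 0$ such that $\mu(B(x,r)) \ge c r^{s+\eta'}$ for all $x \in X_C$ and $0<r<1$, and applying this at $y \in \fii_\iii(C)$ with $r = \beta r_\iii$ yields $m_\iii \ge c \beta^{s+\eta'} r_\iii^{s+\eta'}$. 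Summing,
\[
  c \beta^{s+\eta'} \Bigl(\sum_{i=1}^N r_i^{s+\eta'}\Bigr)^n = c \beta^{s+\eta'} \sum_{|\iii|=n} r_\iii^{s+\eta'} \le \sum_{|\iii|=n} m_\iii \le (1-\alpha)^n \mu(X_C).
\]
Since $\eta' \mapsto \sum_i r_i^{s+\eta'}$ is continuous with value $1 > 1 - \alpha$ at $\eta' = 0$, there exists $\eta_0 > 0$ with $\sum_i r_i^{s+\eta'} > 1 - \alpha$ for every $\eta' \in (0, \eta_0)$. For such $\eta'$ the displayed inequality is violated for all sufficiently large $n$, producing the contradiction and giving $\udimm(\mu) \ge s + \eta_0$.

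The main obstacle is the core estimate $\mu(\fii_\iii(C)) \ge \alpha m_\iii$: this is precisely where COSC (rather than plain OSC) is essential, since one must ensure that balls of radius proportional to $r_\iii$ around condensation points meet $X_C$ only in the scaled condensation $\fii_\iii(C)$. The remainder is a Moran-style cylinder count pitted against the doubling budget, and the dimension gap $\eta_0 > 0$ is extracted through the rigidity of the similarity-dimension equation $\sum_i r_i^s = 1$, which forces any perturbation $s \mapsto s + \eta'$ to create slack that must be absorbed either by $\alpha$ or by the failure of the doubling hypothesis.
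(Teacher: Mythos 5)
Your strategy is genuinely different from the paper's, and in my view it is structurally cleaner in one respect. The paper imports the single-branch estimate of K\"aenm\"aki--Lehrb\"ack: along one chosen $\iii\in\Sigma$ the cylinder measures decay like $c^{n-m}r^s_{\sigma^m(\iii|_n)}$ with $c<1$, and this is converted directly into $\dima[\theta](\mu)\ge s+\eta$ for every $\theta$, with the Minkowski statement then read off from $\udimm(\mu)=\lim_{\theta\downarrow 0}\dima[\theta](\mu)$. You reverse the logic: you aim to prove $\udimm(\mu)\ge s+\eta_0$ first, by a global Moran-type count over all level-$n$ cylinders pitted against a geometric decay of total cylinder mass, and then the spectrum statement follows for free from $\udimm(\mu)\le\dima[\theta](\mu)$ (Proposition \ref{thm:basic-bounds}); the uniformity in $\theta$ is automatic rather than something to be tracked through the cylinder argument. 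Your core estimate $\mu(\fii_\iii(C))\ge\alpha\,\mu(\fii_\iii(X_C))$ is sound and is essentially the same use of COSC-plus-doubling that underlies the cited inequality (5.4) of K\"aenm\"aki--Lehrb\"ack. One small remark: you do not need the identity $\udimm(X_C)=\max\{\udimm(X),\udimm(C)\}$, which is a delicate result for inhomogeneous sets; $\udimm(X)\le\udimm(X_C)\le\dima(X_C)=s$ together with \eqref{eq:COSC} already gives $\udimm(X_C)=s$.

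There is, however, one step that does not follow as written: the inequality $\sum_{j=1}^N m_{\iii j}\le m_\iii-\mu(\fii_\iii(C))$, and hence the induction $\sum_{|\iii|=n}m_\iii\le(1-\alpha)^n\mu(X_C)$. COSC makes the open sets $\fii_{\iii j}(U)$ pairwise disjoint and separates $\fii_\iii(C)$ from every $\fii_{\iii j}(X_C)$, but the compact pieces $\fii_{\iii j}(X_C)\subset\overline{\fii_{\iii j}(U)}$ may share boundary points, and nothing in the hypotheses prevents an arbitrary doubling measure from charging these overlaps; if it does, $\sum_j m_{\iii j}$ can exceed $\mu\bigl(\bigcup_j\fii_{\iii j}(X_C)\bigr)$ and the $(1-\alpha)^n$ decay, which is exactly where your strict gap $\eta_0$ comes from, is lost. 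The repair is cheap: run the recursion on $\mu(A_n)$ where $A_n=\bigcup_{|\iii|=n}\fii_\iii(X_C)$. The sets $\fii_\iii(C)$ with $|\iii|=n$ are pairwise disjoint and each is disjoint from all of $A_{n+1}$ (it lies in $\fii_\iii(U)$, which misses $\overline{\fii_{\iii'}(U)}$ for $\iii'\ne\iii$ and misses $\overline{\fii_{\iii k}(U)}$ for every $k$), so $\mu(A_n)\ge\mu(A_{n+1})+\sum_{|\iii|=n}\mu(\fii_\iii(C))\ge\mu(A_{n+1})+\alpha\mu(A_n)$, giving $\mu(A_n)\le(1-\alpha)^n\mu(X_C)$. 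The lower bound survives because the sets $B(y_\iii,\beta r_\iii)\cap X_C\subset\fii_\iii(C)$ are pairwise disjoint, so $\mu(A_n)\ge c\beta^{s+\eta'}\sum_{|\iii|=n}r_\iii^{s+\eta'}$. With this substitution your contradiction argument goes through unchanged.
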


\begin{proof}
  Observe that, by Proposition \ref{thm:assouad-spectrum-ineq}, $\inf_{0<\theta<1}(\dima[\theta](\mu) - \dima[\theta](X_C)) \ge 0$ for all fully supported finite Borel measures $\mu$. It suffices to show that this infimum is positive for all doubling measures $\mu$ since then, $\udimm(X_C) < \udimm(\mu)$ follows from \cite[Proposition 3.1]{FraserYu2018} and Proposition \ref{thm:basic-bounds}.
  
  Write $s = \dimsim(\Phi)$ and let $\mu$ be a doubling measure on $X_C$. By \cite[Theorem 4.1]{KaenmakiLehrback2017} and \eqref{eq:COSC}, we have $\dima[\theta](X_C) \le \dima(X_C) = \max\{\dima(X), \dima(C)\} = s$ for all $0<\theta<1$. Hence, to show the claim, it is enough to prove that
  \begin{equation} \label{eq:inhomog-goal}
    \inf_{0<\theta<1}\dima[\theta](\mu) > s.
  \end{equation}
  We follow \cite[proof of Theorem 5.1]{KaenmakiLehrback2017} to see that if $x \in C$, then there are $\iii \in \Sigma$, $0<\roo<\dist(X,C)$, and $0<c<1$ such that
  \begin{equation} \label{eq:inhomog-obs}
  \begin{split}
    \mu(B(\fii_{\iii|_n}(x),\roo r_{\iii|_n})) &\le \mu(\fii_{\iii|_n}(X_C)) \le c^{n-m}r_{\sigma^m(\iii|_n)}^s\mu(\fii_{\iii|_m}(X_C)) \\ &\le c^{n-m}r_{\sigma^m(\iii|_n)}^s\mu(B(\fii_{\iii|_n}(x),r_{\iii|_m}\diam(X_C)))
  \end{split}
  \end{equation}
  for all $n \in \N$ and $m \in \{1,\ldots,n\}$. Indeed, the second inequality in \eqref{eq:inhomog-obs} holds since, by \cite[Equation (5.4)]{KaenmakiLehrback2017}, $\mu(\fii_{\iii|_n}(X_C)) \le cr_{\sigma^{n-1}(\iii|_n)}\mu(\fii_{\iii|_{n-1}}(X_C))$. Write $r = \min_{i \in \{1,\ldots,N\}}r_i$ and let $\eta = \tfrac12 \log c/\log r>0$. Note that $cr^{-\eta}<1$ and $r^k\le r_{\jjj}$ for all $\jjj \in \Sigma_k$ and $k \in \N$. For each $0<\theta<1$ and $n \in \N$ choose $m \in \{1,\ldots,n\}$ such that
  \begin{equation} \label{eq:inhomog-choice}
    r_{\iii|_m}\diam(X_C) \le (\roo r_{\iii|_n})^\theta < r_{\iii|_{m-1}}\diam(X_C).
  \end{equation}
  Relying on \eqref{eq:inhomog-obs} and \eqref{eq:inhomog-choice}, we see that
  \begin{equation*}
    \frac{\mu(B(\fii_{\iii|_n}(x),\roo r_{\iii|_n}))}{\mu(B(\fii_{\iii|_n}(x),(\roo r_{\iii|_n})^\theta))} \le c^{n-m}r_{\sigma^m(\iii|_n)}^s \le (cr^{-\eta})^{n-m} r_{\sigma^m(\iii|_n)}^{s+\eta}.
  \end{equation*}
  Since $n-m \to \infty$ and $(cr^{-\eta})^{n-m} \to 0$ as $n \to \infty$, it follows that $\dima[\theta](\mu) \ge s+\eta > s$ for all $0<\theta<1$. Noting that $\eta$ does not depend on $\theta$, we have thus shown \eqref{eq:inhomog-goal} and finished the proof.
\end{proof}

\section{Lower spectrum}

A natural counterpart to the Assouad dimension is the lower dimension introduced by Larman \cite{Larman1967}. Analogously to the Assouad spectrum, the lower dimension gives rise to the lower spectrum. The \emph{lower spectrum} of $\mu$ is
\begin{align*}
  \diml[\theta](\mu) = \sup\{ s \ge 0 : \;&\text{there exists a constant $C \ge 1$ such that} \\
  &\frac{\mu(B(x,r))}{\mu(B(x,r^\theta))} \le C\Bigl( \frac{r}{r^\theta} \Bigr)^s \text{ for all $x \in X$ and } 0<r<1 \}
\end{align*}
for all $0<\theta<1$ and the \emph{Frostman dimension} of $\mu$ is
\begin{align*}
  \dimf(\mu) = \sup\{ s \ge 0 : \;&\text{there exists a constant $C \ge 1$ such that} \\
  &\mu(B(x,r)) \le Cr^s \text{ for all $x \in X$ and } 0<r<1 \}.
\end{align*}

\begin{proposition} \label{thm:frostman-dim}
  If $X$ is a compact metric space and $\mu$ is a fully supported finite Borel measure on $X$, then
  \begin{equation*}
    \diml[\theta](\mu) \le \dimf(\mu)
  \end{equation*}
  for all $0<\theta<1$.
\end{proposition}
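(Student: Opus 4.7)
The argument will mirror the third inequality in the proof of Proposition \ref{thm:basic-bounds}, but dualised. The plan is to take any $s<\diml[\theta](\mu)$, iterate the defining inequality of the lower spectrum telescopically, and show that the resulting upper bound on $\mu(B(x,r))$ gives $\dimf(\mu)\ge t$ for every $t<s$.

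\medskip

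Fix $s<\diml[\theta](\mu)$. By definition there is a constant $C\ge 1$ such that
\begin{equation*}
  \mu(B(x,r))\le C\,r^{(1-\theta)s}\,\mu(B(x,r^\theta))
\end{equation*}
for all $x\in X$ and $0<r<1$. Given $0<r<1$, I would let $k\in\N$ be the unique integer satisfying $r^{\theta^{k-1}}<\tfrac12\le r^{\theta^k}$, and iterate the telescoping identity
\begin{equation*}
  \mu(B(x,r))=\prod_{j=0}^{k-1}\frac{\mu(B(x,r^{\theta^j}))}{\mu(B(x,r^{\theta^{j+1}}))}\,\mu(B(x,r^{\theta^k})).
\end{equation*}
Applying the lower-spectrum bound to each factor $\mu(B(x,r^{\theta^j}))/\mu(B(x,r^{\theta^{j+1}}))\le C\,r^{\theta^j(1-\theta)s}$ and summing the geometric series $\sum_{j=0}^{k-1}\theta^j=(1-\theta^k)/(1-\theta)$, this yields
\begin{equation*}
  \mu(B(x,r))\le C^k\,r^{(1-\theta^k)s}\,\mu(B(x,r^{\theta^k}))\le C^k\,r^{(1-\theta^k)s}\,\mu(X).
\end{equation*}

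\medskip

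Next I would control the two error factors. As in Proposition \ref{thm:basic-bounds}, the choice of $k$ forces
\begin{equation*}
  k<\frac{\log(\log 2/(-\log r))}{\log\theta}+1 \quad\text{and}\quad \theta^k(-\log r)\le\log 2,
\end{equation*}
so $r^{-\theta^k s}\le 2^s$ and $C^k$ is a polylogarithmic quantity in $1/r$. Hence for any $t<s$ one has $C^k\,r^{(1-\theta^k)s}=C^k\cdot 2^s\cdot r^s\cdot r^{t-s}\cdot r^{-t+s} \to 0$ faster than $r^{t-s}$, or more precisely there exists $r_0>0$ with $C^k \le 2^{-s}r^{t-s}$ for all $0<r<r_0$. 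Thus $\mu(B(x,r))\le\mu(X)\,r^t$ for all $x\in X$ and $0<r<r_0$. For $r_0\le r<1$ the bound $\mu(B(x,r))\le\mu(X)\le\mu(X)r_0^{-t}r^t$ holds trivially, so after increasing the constant we obtain the Frostman-type inequality
\begin{equation*}
  \mu(B(x,r))\le C'\,r^t \qquad \text{for all } x\in X,\ 0<r<1,
\end{equation*}
for some $C'\ge 1$. Hence $\dimf(\mu)\ge t$.

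\medskip

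Letting $t\uparrow s$ gives $\dimf(\mu)\ge s$, and since $s<\diml[\theta](\mu)$ was arbitrary the desired inequality follows. The only delicate point is bookkeeping the polylogarithmic factor $C^k$; once we observe that $k$ grows only like $\log\log(1/r)$ — exactly as in the argument for the third inequality of Proposition \ref{thm:basic-bounds} — the absorption into an arbitrarily small power of $r$ is routine, and nothing else presents a genuine obstacle.
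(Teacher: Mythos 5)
Your proposal is correct and follows essentially the same route as the paper: the same choice of $k$ with $r^{\theta^{k-1}}<\tfrac12\le r^{\theta^k}$, the same telescoping product, and the same absorption of the polylogarithmic factor $C^k$ into $r^{s-t}$ for $t<s$. Apart from a harmless slip where an upper bound is written as an equality in the informal aside, the argument matches the paper's proof of Proposition \ref{thm:frostman-dim}.
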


\begin{proof}
  Let $0<\theta<1$ and $t<s<\diml[\theta](\mu)$. Fix $0<r<1$ and choose $k \in \N$ such that $r^{\theta^{k-1}} < \tfrac12 \le r^{\theta^k}$. This implies
  \begin{equation*}
    k < \frac{\log(\frac{\log 2}{-\log r})}{\log\theta}+1 \quad \text{and} \quad r^{-\theta^ks} \le 2^{s}.
  \end{equation*}
  Similarly as in the proof of Proposition \ref{thm:basic-bounds}, we see that
  \begin{align*}
    \mu(B(x,r)) &= \frac{\mu(B(x,r))}{\mu(B(x,r^{\theta}))} \frac{\mu(B(x,r^{\theta}))}{\mu(B(x,r^{\theta^2}))} \cdots \frac{\mu(B(x,r^{\theta^{k-1}}))}{\mu(B(x,r^{\theta^k}))} \mu(B(x,r^{\theta^k})) \\ 
    &\le C^k r^{(1-\theta^k)s} \mu(X) 
    \le 2^sC\mu(X) \biggl( \frac{-\log r}{\log 2} \biggr)^{\frac{\log C}{-\log\theta}} r^{s-t} r^t
  \end{align*}
  for all $x \in X$. Since $(-\log r)^{-\log C/\log\theta} r^{s-t} \to 0$ as $r \downarrow 0$, it follows that there is a constant $C' \ge 1$ such that $\mu(B(x,r)) \le C'r^t$ for all $x \in X$ and $0<r<1$. Hence, $\dimf(\mu) \ge t$ as required.
\end{proof}

The \emph{lower spectrum} of $X$ is
\begin{align*}
  \diml[\theta](X) &= \sup\{s\ge 0 : \text{there exists a constant $0 < c \le 1$ such that} \\
  &\qquad\qquad\qquad\; N_r(B(x,r^\theta))\ge c\Bigl( \frac{r^\theta}{r} \Bigr)^s \text{ for all $x \in X$ and $0<r<1$}\}
\end{align*}
for all $0<\theta<1$. Recall that, by Theorem \ref{thm:minkowski-dim}, \cite[Proposition 3.1]{FraserYu2018}, and Proposition \ref{thm:basic-bounds}, 
  $\lim_{\theta \downarrow 0} \dima[\theta](X) = \inf\{ \lim_{\theta \downarrow 0} \dima[\theta](\mu) : \mu$ is a fully supported finite Borel measure on $X \}$.
The following example shows that there is no analogous result for the lower spectrum.

Let $q > p > 1$ and $N \in \{2,\ldots,pq\}$ be integers, and $A \subset \{0,\ldots,p-1\} \times \{0,\ldots,q-1\}$ a set of $N$ elements. A \emph{Bedford-McMullen carpet} is the invariant set $X \subset [0,1]^2$ associated to a tuple $(\fii_i)_{i=1}^N$ of distinct affine mappings which all have the same linear part $\diag(\tfrac{1}{p},\tfrac{1}{q})$ and the translation part is from the set $\{(\tfrac{j}{p},\tfrac{k}{q}) \in [0,1]^2 : (j,k) \in A\}$. Write $n_j = \#\{k : (j,k) \in A\}$ to denote the number of sets $\fii_i([0,1)^2)$ the vertical line $\{(\tfrac{j}{p},y) : y \in \R\}$ intersects. If there is $n \in \N$ such that $n_j = n$ for all $j$ with $n_j \ne 0$, in which case we say the Bedford-McMullen carpet $X$ has \emph{uniform fibers}, then
\begin{equation*}
  \dimh(X) = \dimm(X) = \dima(X);
\end{equation*}
otherwise,
\begin{equation*}
  \dimh(X) < \dimm(X) < \dima(X);
\end{equation*}
see \cite{Mackay2011}. Here $\dimh$ denotes the Hausdorff dimension; see \cite[\S 3.2]{Falconer2014} or \cite[\S 4]{Mattila1995}.

\begin{example} \label{ex:lower-bad}
  In this example, we exhibit a compact set $X \subset \R^2$ for which there exist $\eta>0$ such that
  \begin{equation*}
    \lim_{\theta \downarrow 0} \diml[\theta](X) - \lim_{\theta \downarrow 0} \diml[\theta](\mu) \ge \eta
  \end{equation*}
  for all finite Borel measures $\mu$ on $X$. By the result of Fraser and Yu \cite[Theorem 3.3]{FraserYu2016}, for any Bedford-McMullen carpet $X$ it is the case that $\lim_{\theta \downarrow 0} \diml[\theta](X) = \dimm(X)$. Let $X$ be a Bedford-McMullen carpet such that $\dimh(X)<\dimm(X)$. Write $\eta = (\dimm(X) - \dimh(X))/2 > 0$ and notice that 
  \begin{equation} \label{eq:lower-bad1}
    \lim_{\theta \downarrow 0} \diml[\theta](X) \ge \dimh(X)+\eta.
  \end{equation}
  Let $\mu$ be a finite Borel measure on $X$. If $s < \dimf(\mu)$, then there is a constant $C \ge 1$ such that $\mu(B(x,r)) \le Cr^s$ for all $x \in X$ and $0<r<1$. Since $\mu(X) \le \sum_i \mu(U_i) \le C\sum_i \diam(U_i)^s$ for all $\delta$-covers $\{U_i\}_i$ of $X$, we get $\HH^s_\delta(X) \ge \mu(X) > 0$ for all $\delta > 0$ and, consequently, the $s$-dimensional Hausdorff measure of $X$ is $\HH^s(X)=\lim_{\delta \downarrow 0}\HH^s_\delta(X)>0$. It follows that
  \begin{equation} \label{eq:lower-bad2}
    \dimh(X) \ge \dimf(\mu).
  \end{equation}
  Finally, by \eqref{eq:lower-bad1}, Proposition \ref{thm:frostman-dim}, and \eqref{eq:lower-bad2},
  \begin{equation*}
    \lim_{\theta \downarrow 0} \diml[\theta](X) - \lim_{\theta \downarrow 0} \diml[\theta](\mu) \ge \dimh(X) + \eta - \dimf(\mu) \ge \eta
  \end{equation*}
  as desired.
\end{example}

By the result of Fraser \cite[Theorem 6.3.1]{Fraser2021},  $\lim_{\theta \downarrow 0} \diml[\theta](X) = \ldimm(X)$ for every invariant set $X$ associated to a tuple of bi-Lipschitz contractions. Therefore, any such $X$ satisfying $\dimh(X) < \ldimm(X)$ has the property described in Example \ref{ex:lower-bad}.

By \eqref{eq:lower-bad2} and the Frostman's lemma (see e.g.\ \cite[Theorem 8.8]{Mattila1995}), we have
\begin{equation*}
  \dimh(X) = \sup\{ \dimf(\mu) : \mu \text{ is a finite Borel measure on } X \}.
\end{equation*}
Therefore, recalling Theorem \ref{thm:minkowski-dim}, the natural pair with symmetric properties is Hausdorff dimension and upper Minkowski dimension. This is interesting as usually Hausdorff and packing dimensions (or measures) form the natural pair.

\begin{ack}
  The authors thank V.\ Suomala for finding a gap in one of the proofs in the first version of the paper. The authors also thank P. Mattila for useful discussion and J. Bj\"orn for pointing out the reference \cite{AlvaradoGorkaHajlasz2020}.
\end{ack}


\begin{thebibliography}{10}

\bibitem{AlvaradoGorkaHajlasz2020}
R.~Alvarado, P.~G{\'{o}}rka, and P.~Haj{\l }asz.
\newblock Sobolev embedding for {$M^{1, p}$} spaces is equivalent to a lower
  bound of the measure.
\newblock {\em J. Funct. Anal.}, 279(7):108628, 39, 2020.

\bibitem{Assouad1983}
P.~Assouad.
\newblock Plongements lipschitziens dans {${\bf R}^{n}$}.
\newblock {\em Bull. Soc. Math. France}, 111(4):429--448, 1983.

\bibitem{BaranyJurgaKolossvary2021}
B.~B{\'a}r{\'a}ny, N.~Jurga, and I.~Kolossv{\'a}ry.
\newblock On the convergence rate of the chaos game.
\newblock {\em Int. Math. Res. Not. IMRN}.
\newblock To appear, available at arXiv:2102.02047.

\bibitem{Barnsley2006}
M.~F. Barnsley.
\newblock {\em Superfractals}.
\newblock Cambridge University Press, 2006.

\bibitem{BarnsleyDemko1985}
M.~F. Barnsley and S.~Demko.
\newblock Iterated function systems and the global construction of fractals.
\newblock {\em Proc. Roy. Soc. London Ser. A}, 399(1817):243--275, 1985.

\bibitem{Cutler1995b}
C.~D. Cutler.
\newblock The density theorem and {H}ausdorff inequality for packing measure in
  general metric spaces.
\newblock {\em Illinois J. Math.}, 39(4):676--694, 1995.

\bibitem{Cutler1995}
C.~D. Cutler.
\newblock Strong and weak duality principles for fractal dimension in
  {E}uclidean space.
\newblock {\em Math. Proc. Cambridge Philos. Soc.}, 118(3):393--410, 1995.

\bibitem{Davies1952}
R.~O. Davies.
\newblock Subsets of finite measure in analytic sets.
\newblock {\em Indagationes Math.}, 14:488--489, 1952.

\bibitem{Falconer1997}
K.~J. Falconer.
\newblock {\em Techniques in fractal geometry}.
\newblock John Wiley \& Sons Ltd., Chichester, 1997.

\bibitem{Falconer2014}
K.~J. Falconer.
\newblock {\em Fractal geometry}.
\newblock John Wiley \& Sons, Ltd., Chichester, third edition, 2014.
\newblock Mathematical foundations and applications.

\bibitem{Fraser2021}
J.~M. Fraser.
\newblock {\em Assouad Dimension in Fractal Geometry: theory, variations, and
  applications}.
\newblock Cambridge University Press, 222, 2020.

\bibitem{FraserHowroyd2020}
J.~M. Fraser and D.~C. Howroyd.
\newblock On the upper regularity dimensions of measures.
\newblock {\em Indiana Univ. Math. J.}, 69(2):685--712, 2020.

\bibitem{FraserYu2016}
J.~M. Fraser and H.~Yu.
\newblock Assouad-type spectra for some fractal families.
\newblock {\em Indiana Univ. Math. J.}, 67(5):2005--2043, 2018.

\bibitem{FraserYu2018}
J.~M. Fraser and H.~Yu.
\newblock New dimension spectra: finer information on scaling and homogeneity.
\newblock {\em Adv. Math.}, 329:273--328, 2018.

\bibitem{HareHare2020}
K.~E. Hare and K.~G. Hare.
\newblock Intermediate {A}ssouad-like dimensions for measures.
\newblock {\em Fractals}, 28(07):2050143, 2020.

\bibitem{HareTroscheit2019}
K.~E. Hare and S.~Troscheit.
\newblock Lower assouad dimension of measures and regularity.
\newblock {\em Math. Proc. Cambridge Philos. Soc.}, 170(2):379--416, 2021.


\bibitem{Heinonen2001}
J.~Heinonen.
\newblock {\em Lectures on analysis on metric spaces}.
\newblock Universitext. Springer-Verlag, New York, 2001.

\bibitem{Howroyd1995}
J.~D. Howroyd.
\newblock On dimension and on the existence of sets of finite positive
  {H}ausdorff measure.
\newblock {\em Proc. London Math. Soc. (3)}, 70(3):581--604, 1995.

\bibitem{Hutchinson1981}
J.~E. Hutchinson.
\newblock Fractals and self-similarity.
\newblock {\em Indiana Univ. Math. J.}, 30(5):713--747, 1981.

\bibitem{JJKRRS2010}
E.~J\"arvenp\"a\"a, M.~J\"arvenp\"a\"a, A.~K\"aenm\"aki, T.~Rajala, S.~Rogovin,
  and V.~Suomala.
\newblock Packing dimension and {A}hlfors regularity of porous sets in metric
  spaces.
\newblock {\em Math. Z.}, 266(1):83--105, 2010.

\bibitem{JoycePreiss1995}
H.~Joyce and D.~Preiss.
\newblock On the existence of subsets of finite positive packing measure.
\newblock {\em Mathematika}, 42(1):15--24, 1995.

\bibitem{KaenmakiLehrback2017}
A.~K\"{a}enm\"{a}ki and J.~Lehrb\"{a}ck.
\newblock Measures with predetermined regularity and inhomogeneous self-similar
  sets.
\newblock {\em Ark. Mat.}, 55(1):165--184, 2017.

\bibitem{KaenmakiLehrbackVuorinen2013}
A.~K\"aenm\"aki, J.~Lehrb\"ack, and M.~Vuorinen.
\newblock Dimensions, {W}hitney covers, and tubular neighborhoods.
\newblock {\em Indiana Univ. Math. J.}, 62(6):1861--1889, 2013.

\bibitem{KaenmakiRajalaSuomala2016}
A.~K\"aenm\"aki, T.~Rajala, and V.~Suomala.
\newblock Local homogeneity and dimensions of measures.
\newblock {\em Ann. Sc. Norm. Super. Pisa Cl. Sci. (5)}, 16(4):1315--1351,
  2016.

\bibitem{KleptsynPollicottVytnova2022}
V.~Kleptsyn, M.~Pollicott, and P.~Vytnova.
\newblock Uniform lower bounds on the dimension of {B}ernoulli convolutions.
\newblock {\em Adv. Math.}, 395:Paper No. 108090, 2022.

\bibitem{Larman1967}
D.~G. Larman.
\newblock A new theory of dimension.
\newblock {\em Proc. London Math. Soc. (3)}, 17:178--192, 1967.

\bibitem{Mackay2011}
J.~M. Mackay.
\newblock Assouad dimension of self-affine carpets.
\newblock {\em Conform. Geom. Dyn.}, 15:177--187, 2011.

\bibitem{Mattila1995}
P.~Mattila.
\newblock {\em Geometry of sets and measures in Euclidean spaces: fractals and
  rectifiability}.
\newblock Cambridge University Press, 1995.

\bibitem{Pesin1997}
Y.~B. Pesin.
\newblock {\em Dimension theory in dynamical systems}.
\newblock Chicago Lectures in Mathematics. University of Chicago Press,
  Chicago, IL, 1997.
\newblock Contemporary views and applications.

\bibitem{Rogers1998}
C.~A. Rogers.
\newblock {\em Hausdorff measures}.
\newblock Cambridge Mathematical Library. Cambridge University Press,
  Cambridge, 1998.

\bibitem{Tricot1982}
C.~Tricot.
\newblock Two definitions of fractional dimension.
\newblock {\em Math. Proc. Cambridge Philos. Soc.}, 91(1):57--74, 1982.

\bibitem{VolbergKonyagin1987}
A.~L. Vol{$'$}berg and S.~V. Konyagin.
\newblock On measures with the doubling condition.
\newblock {\em Izv. Akad. Nauk SSSR Ser. Mat.}, 51(3):666--675, 1987.

\end{thebibliography}

\end{document}